\documentclass[letterpaper]{article}
\usepackage[left=3cm,right=3cm,top=3.5cm]{geometry}
\usepackage{indentfirst}
\usepackage{amsmath}
\usepackage{amsthm}
\usepackage{amssymb}
\usepackage{amsfonts}
\usepackage{stmaryrd}
\usepackage{tikz-cd}
\usepackage{tikz}
\usepackage{enumitem}

\def\Gm{\mathbb{G}_m}
\def\SO{\mathrm{SO}}
\def\so{\mathfrak{so}}
\def\Orth{\mathrm{O}}
\def\Sp{\mathrm{Sp}}
\def\sp{\mathfrak{sp}}

\def\GL{\mathrm{GL}}
\def\sl{\mathfrak{sl}}
\def\GLn{{\GL_n}}
\def\GLm{{\GL_m}}
\def\Weil{\mathrm{Weil}}
\def\WeilGH{{\mathrm{Weil}_{G,H}}}
\def\ss{\mathrm{ss}}
\def\diag{\mathrm{diag}}
\def\Mat{\mathrm{Mat}}

\def\Gr{\mathrm{Gr}}

\def\IC{\mathrm{IC}}
\def\dV{{\delta_V}}
\def\dG{{\delta_G}}
\def\dH{{\delta_H}}

\def\Mp{\widetilde{\Sp}}
\def\Gl{{G^\vee}}
\newcommand{\gl}{{\mathfrak{g}^\vee}}
\def\gln{{\mathfrak{gl}_n}}
\def\glm{{\mathfrak{gl}_m}}
\def\GO{{G_{\mathcal{O}}}}

\def\Hl{{H^\vee}}
\def\hl{{\mathfrak{h}^\vee}}
\def\HO{{H_\mathcal{O}}}

\def\VO{{V_\mathcal{O}}}
\def\VF{{V_F}}

\def\wtimes{\widetilde{\times}}

\def\Perv{\mathrm{Perv}}
\def\perf{\mathrm{perf}}

\def\Rep{\mathrm{Rep}}
\def\QCoh{\mathrm{QCoh}}

\def\Sat{\mathrm{Sat}}

\DeclareMathOperator{\Hom}{Hom}
\DeclareMathOperator{\End}{End}

\DeclareMathOperator{\Ext}{Ext}
\DeclareMathOperator{\Sym}{Sym}

\DeclareMathOperator{\SingSupp}{SS}

\DeclareMathOperator{\diagmat}{diag}

\def\transpose{\mathrm{t}}
\def\cartan{\mathfrak{t}}
\def\sym{\mathfrak{S}}
\def\Sn{{\sym_n}}
\def\Sm{{\sym_m}}

\def\bbA{\mathbb{A}}
\def\bbC{\mathbb{C}}
\def\bbZ{\mathbb{Z}}
\def\Fq{\mathbb{F}_q}
\def\calO{{\mathcal{O}}}
\def\calL{\mathcal{L}}
\def\calF{\mathcal{F}}
\def\calS{\mathcal{S}}
\def\Heis{\mathrm{Heis}}
\def\diff{\mathrm{d}}
\DeclareMathOperator{\quo}{\backslash}

\def\Kos{\Sigma}
\def\pt{\mathrm{pt}}
\def\std{\mathrm{std}}
\def\Ad{\mathrm{ad}}
\DeclareMathOperator{\res}{res}

\def\bbP{\mathbb{P}}
\def\Pn{{\bbP^{n-1}}}
\def\Pm{{\bbP^{m-1}}}

\def\LagV{{\widetilde{\calL}(V)}}
\DeclareMathOperator{\Spec}{Spec}

\DeclareMathOperator{\gRes}{gRes}
\DeclareMathOperator{\Res}{Res}
\DeclareMathOperator{\Irr}{Irr}
\def\shear{{\mathbin{\mkern-6mu\fatslash}}}
\newcommand{\sh}[1]{{#1}^\shear}

\newcommand{\gr}[1]{\mathrm{gr}(#1)}
\def\OS{\calO(S)}

\def\stimes{\operatorname*{\times}\limits}
\DeclareMathOperator*{\conv}{\ast}
\def\cn{\!{\conv\limits_\GLn}\!}
\def\cm{\!{\conv\limits_\GLm}\!}
\def\cG{{\conv\limits_G}}
\def\cH{{\conv\limits_H}}
\DeclareMathOperator{\pr}{pr}
\DeclareMathOperator{\act}{act}
\DeclareMathOperator{\proj}{proj}
\DeclareMathOperator{\add}{add}
\DeclareMathOperator{\mult}{mult}
\def\FG{\calF_G}

\makeatletter
\newcommand*\leftdash{\rotatebox[origin=c]{-45}{$\dabar@\dabar@\dabar@$}}
\newcommand*\rightdash{\rotatebox[origin=c]{45}{$\dabar@\dabar@\dabar@$}}
\makeatother

\def\sslash{/\!\!/}
\def\ssslash{/\!\!/\!\!/}

\newtheorem{defn}{Definition}
\newtheorem{lem}{Lemma}
\newtheorem{prop}{Proposition}
\newtheorem{thm}{Theorem}
\newtheorem{conj}{Conjecture}
\newtheorem{cor}{Corollary}
\newtheorem*{remark}{Remark}

\begin{document}
\title{Derived Weil Representation and Relative Langlands Duality}
\author{Haoshuo Fu}
\date{}
\maketitle
\begin{abstract}
	The Weil representation is a particularly significant linear representation of the metaplectic group, used in the study of theta correspondence. In this paper, I introduce a derived category version of the Weil representation in the local field case. For the dual pair $ (\GLn,\GLm) $, I give a coherent description of this category, in the philosophy of relative Langlands duality. 
\end{abstract}
\tableofcontents
\section{Introduction}
\subsection{Weil representations}
The Weil representation is a special representation of symplectic group. The finite field case is defined as follows: let $V$ be a symplectic vector space over the finite field $k=\Fq$ with odd characteristic, and $\Heis(V)$ be the Heisenberg group defined by the symplectic form: 
\begin{equation*}
	1\rightarrow k\rightarrow \Heis(V)\rightarrow V\rightarrow 1. 
\end{equation*}
For a character $\psi\colon \rightarrow \bbC^\times$, we can define an irreducible representation $H_{V,\psi}$ of $\Heis(V)$ with central character $\psi$. It is a subspace of functions on the set $V$ and its dimension is $q^{\frac12\dim V}$. This can be extend to a projective representation $\omega_\psi$ of $\Sp(V)$, called the Weil representation. In general, it can be descent to a representation of the double cover $ \Mp(V) $ of the symplectic group. The case in local field $k((t))$ is defined similarly using residue. 

A dual pair $(G,H)$ is the subgroup $G\times H\rightarrow \Sp(V)$ such that they are the centralizer of each other. Examples are $(\Sp(V_1),\Orth(V_2))$ where $V=V_1\otimes V_2$, and $(\GL(L_1),\GL(L_2))$ where $V=\Hom(L_1,L_2)\oplus\Hom(L_2,L_1)$. By restricting Weil representation to this subgroup, we obtain $\WeilGH$ as a representation of $G\times H$. Associated to it, we can define theta functions and construct theta lifts by using it as an integral kernel. 

By choosing a Lagrangian $ L\subset V $, the Weil representation can be identified with $ L^2 $-functions on $ L $ or $ V/L $. Thus it has a natural categorification $ D(L) $. In \cite{SR07}, the action of $ D(\Sp(V)) $ is constructed via the functor
\begin{equation*}
	D(\Sp(V))\rightarrow \End(D(L))\simeq D(L\times L)\simeq D(V)
\end{equation*}
giving by a sheaf in $ D(\Sp(V)\times V) $. 

In the local field case, one geometric model of Weil representation is constructed in \cite{LL09}. 

When studying Weil representations, we would expect more compatibilities such as the commutativity of these two actions. By mimicking the lattice model of the Weil representation, I could define the derived Weil category with the action of Hecke categories of $ G\times H $ at the same time. 
\begin{thm}
	Let $ F $ be a local field and $ \calO $ its ring of integers. For a variety $ X $, let $ X_\calO $ be its arc space and $ X_F $ be its loop space. 
	
	Let $ \WeilGH $ be the category of $ \GO\times\HO $-equivariant $ (\VO,\psi) $-equivariant sheaves on $ V_F $. Let $ \Sat_G= D_{\GO}(G_F/G_\calO) $ be the derived Satake category. Then we have the action of $ \Sat_{G\times H}\simeq\Sat_G\otimes\Sat_H $ on $ \WeilGH $. Hence the actions of $ \Sat_G $ and $ \Sat_H $ commute in the strongest sense. 
\end{thm}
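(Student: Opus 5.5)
The plan is to realize $\WeilGH$ as a category of sheaves on a quotient stack and obtain the Hecke action by convolution. Since $G\times H\to\Sp(V)$ is a dual pair, the product $G\times H$ acts linearly on $V$ preserving the symplectic form. Hence $G_F\times H_F$ acts on $V_F$. The subgroup $\GO\times\HO$ preserves the lattice $\VO$ and the character $\psi\circ\res\circ\omega$ defining the $(\VO,\psi)$-twisted equivariance. So $\WeilGH$ is the category of $\GO\times\HO$-equivariant objects in the twisted category $D_{(\VO,\psi)}(V_F)$. This twisted category is a finite-dimensional-type model after truncation: write $V_F=\varinjlim t^{-N}\VO$, and on each $t^{-N}\VO$ the $(\VO,\psi)$-equivariance reduces to a condition relative to $t^N\VO$. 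I would first set this up carefully as an ind/pro limit, so that all functors below are defined on finite-type pieces.

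Next, I define the action of $\Sat_G$ by the usual convolution diagram. Take
\begin{equation*}
	G_F\times^{\GO} V_F\xrightarrow{\act} V_F.
\end{equation*}
For $\calS\in D_{\GO}(G_F/\GO)$ and $\calF\in\WeilGH$, set $\calS\cG\calF=\act_!(\calS\wtimes\calF)$. To make this well defined, I must check that the twisted product $\calS\wtimes\calF$ descends. This holds because $g\in\GO$ preserves both $\VO$ and $\psi$, so the $(\VO,\psi)$-twist is $G_F$-equivariant in the appropriate sense: for $g\in G_F$, the pullback along $g$ sends $(\VO,\psi)$-equivariant sheaves to $(g^{-1}\VO,\psi)$-equivariant ones. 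Then $\act_!$ must land back in $(\VO,\psi)$-equivariant sheaves. I would handle this by averaging, that is, by noting that over a bounded piece of $\Gr_G$ the lattices $g\VO$ lie between $t^{N}\VO$ and $t^{-N}\VO$. In that range, the $(\VO,\psi)$-equivariance is recovered by $!$-averaging over $\VO/(\VO\cap g\VO)$, which is a finite-dimensional unipotent group. The same construction applies to $H$.

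The associativity and unit constraints for $\cG$ follow from the standard convolution argument, using the diagram $G_F\times^{\GO}G_F\times^{\GO}V_F$ and proper base change, exactly as for the Satake category acting on itself. Then I handle the $\Sat_{G\times H}$-action. Since $G_F\times H_F$ acts on $V_F$ and $\GO\times\HO$ preserves the twist, the same construction with $G\times H$ in place of $G$ gives the action of $\Sat_{G\times H}=D_{\GO\times\HO}((G\times H)_F/(G\times H)_\calO)$. The equivalence $\Sat_{G\times H}\simeq\Sat_G\otimes\Sat_H$ follows from $\Gr_{G\times H}=\Gr_G\times\Gr_H$ together with a Künneth formula for equivariant constructible sheaves; over $\bbC$ coefficients this amounts to compact generation by products of $\IC$'s. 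Restricting to $\Sat_G\otimes 1$ and $1\otimes\Sat_H$ then gives two actions that commute coherently, which is the meaning of ``in the strongest sense.''

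I expect the main obstacle to be the second step: showing that convolution preserves the $(\VO,\psi)$-twisted equivariance, together with the convergence and finiteness issues on the infinite-dimensional $V_F$. Here the commutation of $G$ and $H$ inside $\Sp(V)$ is used to make the combined group act compatibly on the twist. I would first attempt the averaging argument on each finite truncation $t^{-N}\VO/t^{N}\VO$. There the twist becomes a character of a finite-dimensional vector group, and the check reduces to the invariance of $\psi(\omega(\cdot,\cdot))$ under $\GO\times\HO$.
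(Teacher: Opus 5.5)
\textbf{There is a genuine gap: associativity.} Your overall shape matches the paper: work on the truncations $t^{-N}\VO/t^N\VO$, correct the naive convolution by an intertwiner from the $(g\VO,\psi)$-model back to the $(\VO,\psi)$-model, pass to the colimit, and apply this to $G\times H$. But the step you treat as routine is where the content lies.

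Associativity does not follow from ``the standard convolution argument \ldots\ exactly as for the Satake category acting on itself.'' That argument needs $G_F$ to act on the category by plain pushforward. Here $G_F$ moves the twisting datum, so your action inserts, for each $g$, a $\psi$-twisted averaging $\mathrm{Av}_{\VO\leftarrow g\VO}$ over $\VO/(\VO\cap g\VO)$. Associativity then requires isomorphisms $\mathrm{Av}_{L_1\leftarrow L_2}\circ\mathrm{Av}_{L_2\leftarrow L_3}\simeq\mathrm{Av}_{L_1\leftarrow L_3}$ for $L_1=\VO$, $L_2=g_1\VO$, $L_3=g_1g_2\VO$, and these must be coherent in families over the convolution space $G_F\times^{\GO}G_F\times^{\GO}V_F$.

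No such canonical isomorphism exists. Computing the composite produces $R\Gamma_c$ of $\calL_\psi$ pulled back along a quadratic form determined by the relative position of the three Lagrangians (the Maslov--Kashiwara form). This is a line placed in degree equal to the rank of that form, and it is trivialized only after choosing a square root of its discriminant. In families this gives a stratum-dependent shift and a $\mu_2$-local system, the geometric shadow of the Weil representation being only projective for $\Sp$. Proper base change does not touch this. Your proposed check on truncations (invariance of $\psi\circ\omega$ under $\GO\times\HO$) only shows that the equivariance condition makes sense. Relatedly, the normalizing shift of $\mathrm{Av}_{\VO\leftarrow g\VO}$ depends on the $\GO$-orbit of $g$, so even assembling these functors into a single kernel over $\Gr_G$ needs an argument.

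\textbf{How the paper handles it.} It replaces naive averaging by the canonical intertwining kernel of Lafforgue--Lysenko. This is a sheaf $\calF_{\LagV}$ on $\LagV\times\LagV\times V$, where $\LagV$ is the metaplectic cover of the Lagrangian Grassmannian. Pulling back along $g\mapsto(gL,L)$ gives $\FG$, and the action is $\calS*\calF=\act_!(\pr_2^*\FG\otimes\pr_{23}^*\calS\otimes\pr_{13}^*\calF\otimes\calL_\psi)$. Associativity is the composition identity for $\calF_{\LagV}$, pulled back along $(g_1,g_2)\mapsto(g_1g_2L,g_1L,L)$. The unit comes from $\Delta^*\calF_{\LagV}=\calF_\Delta$. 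Compatibility of $\calF_{\LagV}$ with passing to $W^\perp/W$ makes the truncated actions compatible, so they assemble on the colimit.

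The hypothesis that makes this an action of $\Sat_{G\times H}$, rather than of a $\mu_2$-twisted (metaplectic) Satake category, is that $G\times H\to\Sp(V)$ lifts to $\Mp(V)$. The commutation of $G$ and $H$, which you point to, is automatic and plays no role.

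For $(\GLn,\GLm)$ alone you could avoid all this. Use the Fourier-transform comparison with the Schr\"odinger model along the $G\times H$-stable polarization $V=L\oplus L^*$. There $G_F\times H_F$ acts geometrically on $L_F$, and your convolution argument does apply, up to a shift that is locally constant on $\Gr_{G\times H}$. As written, however, your proposal stays in the lattice model.
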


In \cite{Ly11}, Lysenko constructed the functor from the heart of derived Satake category to the semisimplification of the heart of the Weil category
\begin{equation*}
	\Perv_{\GO}(\Gr_G)\simeq\Rep(\Gl)\rightarrow (\Weil^\heartsuit_{G,H})^\ss, 
\end{equation*}
and showed that this is an equivalence in the case of $ (\GLn,\GLm) $-case and conjectured it is also true in the $ (\Sp_{2m},\SO_{2n}) $-cases. We will show this conjecture is true in section 3. 

Under derived Satake equivalence \cite{BF08}, we can construct the functor 
\begin{equation*}
	D_{\GO}(\Gr_G)\simeq \QCoh_\perf^\Gl(\gl^*[2])\rightarrow \WeilGH.
\end{equation*}
However, this functor is not an isomorphism in general as the left hand side does not have any information of $H$. Hence a natural question is to give a coherent description of $ \WeilGH $ in terms of $ \Gl $ and $ \Hl $. 

Consider the following cases: 
\begin{itemize}[itemsep=0pt]
	\item $ G=\GLn, H=\GLm, n<m $. Let $ (e,h,f) $ be a principal $ \sl_2 $-triple in $ \mathfrak{gl}_{m-n} $ and further embedded into $ \mathfrak{gl}_m=\hl $;
	\item $ G=\SO_{2n}, H=\Sp_{2m}, n\le m $. Let $ (e,h,f) $ be a principal $ \sl_2 $-triple in $ \so_{2m-2n+1} $ and further embedded into $ \so_{2m+1}=\hl $;
	\item $ G=\Sp_{2n}, H=\SO_{2m}, n<m $. Let $ (e,h,f) $ be a principal $ \sl_2 $-triple in $ \so_{2m-2n-1} $ and further embedded into $ \so_{2m}=\hl $. 
\end{itemize}
Let $ S $ be the Slodowy slice $ f+\mathfrak{z}_\hl(e) $ corresponding to the $ \sl_2 $-triple $ (e,h,f) $ inside $ \hl $ and hence inside $ \hl^* $ using the canonical isomorphism $ \hl^*\simeq \hl $. $ S $ carries the action of $ \Gl $ because it acts trivially on the $ \sl_2 $-triple. Besides, $ S $ carries the grading defined by $ t^2\exp(th) $ commuting with $ \Gl $-action. Let $ \sh{S} $ be the dg-scheme with this cohomological grading. 
\begin{conj}
	We have the equivalence of categories
	\begin{equation*}
		\WeilGH \simeq \QCoh_\perf^\Gl(\sh{S}),
	\end{equation*}
	and the equivalence is compatible with the action of derived Hecke categories of $ G $ and $ H $ on both side. 
\end{conj}
\begin{remark}\normalfont
	The action of $ D_{\GO}(\Gr_G)\simeq \QCoh_\perf^\Gl(\gl^*[2]) $ comes from the stack map $ \sh{S}/\Gl \rightarrow \gl^*[2]/\Gl $. 
	%
	For the action of $ \Sat_H\simeq \QCoh_\perf^\Hl(\hl^*[2]) $, it is first mapped to $ \QCoh_\perf^{\Gl\times\Gm}(\hl^*[2]) $, which is equivalent to $ \QCoh_\perf^{\Gl\times\Gm}(\sh{\hl^*}) $. This category acts on $ \QCoh_\perf^\Gl(\sh{S}) $ via the stack map $ \sh{S}/\Gl \rightarrow \hl^*[2]/\Gl $. 
\end{remark}
In this paper, the first case is proved: 
\begin{thm}
	In the case of $ G=\GLn, H=\GLm, n<m $, the categories are equivalent. If the functor in section 5 of \cite{Ly11} is symmetric monoidal, then the above statement about Hecke action is true. 
\end{thm}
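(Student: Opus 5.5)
The strategy is to write both sides as categories of modules over a single graded endomorphism algebra of a compact generator, and then compare these algebras.

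\emph{Step 1: generation on the Weil side.} Inside $\WeilGH$, take the distinguished object $\IC_0$: the $(\VO,\psi)$-equivariant sheaf supported on the lattice $\VO$, which corresponds to the unramified vector of the lattice model. Acting on it by $\Sat_G$ gives objects $\calF_\lambda=\IC_\lambda\cG\IC_0$ indexed by dominant coweights $\lambda$ of $\GLn$. Lysenko's result in \cite{Ly11} for $(\GLn,\GLm)$ says that $\Rep(\Gl)\to(\Weil^\heartsuit_{G,H})^\ss$ is an equivalence. I would use it to show two things: the irreducible objects of the heart are exactly the $\calF_\lambda$ (they are semisimple, since they are pure), and the orbit stratification of $V_F$ by $\GO\times\HO$ is indexed by dominant $\lambda$ for $G$. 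Together these give that $\{\calF_\lambda\}$, equivalently the single object $\calF_0$ viewed as a $\Sat_G$-module generator, compactly generates $\WeilGH$. This reduces the problem to showing
\[
\WeilGH\simeq \Sat_G\text{-mod generated by }\calF_0 .
\]
Via derived Satake \cite{BF08}, this is modules over the algebra object $\mathcal{A}=\underline{\End}(\calF_0)$ in $\QCoh^\Gl(\gl^*[2])$, namely
\[
\bigoplus_\lambda \Ext^\bullet(\calF_0,\calF_\lambda)\otimes V_\lambda^* .
\]

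\emph{Step 2: compute $\mathcal{A}$.} The sheaves are pure and the $\Ext$ groups are parity-vanishing, so $\mathcal{A}$ is formal: it is a commutative graded $\Gl$-equivariant algebra over $\Sym(\gl[-2])$. I would compute its graded character by hyperbolic localization / restriction to torus-fixed points in $V_F$, reducing to $(\mathrm{GL}_1,\mathrm{GL}_1)$ pieces. The target is to match it with $\OS$. Under $\hl=\mathfrak{gl}_m\supset\gln\times\mathfrak{gl}_{m-n}$, the Slodowy slice $S=f+\mathfrak{z}(e)$ has functions
\[
\Sym\bigl(\gln\oplus (\bbC^n\otimes\bbC^{*})\oplus(\bbC^{n*}\otimes\bbC)\oplus\bbC^{m-n}\bigr)
\]
as a graded $\GLn$-module, with the shifted gradings coming from the $h$-weights. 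The map $\sh{S}\to\gl^*[2]$ is the projection onto the $\gln$ block. I would build an algebra map $\OS\to\mathcal{A}$ by specifying the generators. The $\gln$ part comes from the Satake action. The vector parts come from the $\Ext$ classes between $\calF_0$ and $\calF_{\pm\omega_1}$, which are minuscule shifts. The $\bbC^{m-n}$ part comes from the action of $\Sat_H$ on $\calF_0$, i.e.\ the equivariant cohomology of $H$ restricted through the stabilizer. Then I would show the map is an isomorphism by comparing graded characters, using that $\OS$ is free over the $\GLn$-invariant part (a Kostant-type flatness).

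\emph{Step 3: Hecke compatibility.} $\Sat_G$-linearity is built in by construction. For $\Sat_H$ I would use Lysenko's functor from section 5 of \cite{Ly11}; if it is symmetric monoidal, it identifies the $H$-action on generators with the map $\sh{S}/\Gl\to\hl^*[2]/\Gl$ described in the Remark, and linearity then extends from generators.

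The main obstacle is Step 2: finding generators and relations of $\mathcal{A}$ and proving the comparison map is injective and surjective, not merely that the characters agree. My first attempt would be to take the $\Gl$-invariants and reduce to $\Ext^\bullet(\calF_0,\calF_0)$, identified as $H^\bullet_{\GO\times\HO}$ of the stalk at the lattice. I would match this with $\OS^\Gl$, which is a polynomial ring, and then use freeness to lift the isomorphism to all isotypic components.
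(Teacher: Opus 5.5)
Your reduction in Step 1 is the one the paper relies on: everything comes down to the algebra $\mathcal{A}=\Hom(\dV,\dV\cG\calO(\Gl))$. Step 2, however, carries the whole content of the theorem, and it has a genuine gap both in producing an algebra map $\calO(\sh{S})\to\mathcal{A}$ and in proving it bijective.

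\emph{Constructing the map.} Since $\calO(\sh{S})$ is a polynomial ring, prescribing images of generators defines an algebra map only if those images commute. ``Formal, hence commutative'' is a non sequitur. Moreover, $\mathcal{A}$ is not concentrated in even degrees. The lowest nonzero group $\Ext^{m-n+1}(\dV,\dV\cn\std_n)$ has odd degree when $m-n$ is even, and for $n=1$, $m$ odd, the generator $x$ has odd degree $m$ while $x^2\neq0$. So $\mathcal{A}$ is not graded-commutative, and the naive commutativity that $\calO(\sh{S})$ requires has to be proved.

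\emph{Bijectivity.} Localizing to torus-fixed points only computes $\mathcal{A}$ after inverting elements of $R=H^*_{\GLn}(\pt)\otimes H^*_{\GLm}(\pt)$. It does not see the degrees of generators, so it does not give graded characters. Your fallback, an isomorphism on $\Gl$-invariants lifted ``by freeness'', fails in general. For $\Gm$ acting on $\bbC[x,y]$ with weights $\pm1$, the inclusion $\bbC[x,y]\subset\bbC[y,x/y]$ is an isomorphism on invariants, and both sides are free over $\bbC[xy]$ in each weight, yet it is not onto: the cokernel is torsion along $xy=0$. Even the isomorphism on invariants is not automatic. You need the slice invariants $v^*x^kv$ (with $x\in\gln$, $v\in\std_n$, $v^*\in\std_n^*$), built from your lowest $\Ext$ classes to and from $\calF_{\pm\omega_1}$, together with the $a_i$, to generate $R$ over $H^*_{\GLn}(\pt)$. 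For $n=1$ this is the statement that $xy$ is the equivariant Euler class of $\std$, a Thom-class computation your plan does not contain.

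The missing idea is to localize over the base $\Spec R$ and then extend across codimension two. As a sum of costalks, $\mathcal{A}=\bigoplus_W i_0^!(\dV\cG W)\otimes W^*$ is free over $R$, so it embeds into its localizations. At a generic $t\in\cartan_n\times\cartan_m$ the only fixed point is $0$, giving $\mathcal{A}\otimes_R\mathrm{Frac}(R)\simeq\sh{\calO(\Gl)}\otimes\mathrm{Frac}(R)$; this already yields commutativity. On the resultant divisor the fixed locus is $\Hom(\bbC,\bbC)$, and one reduces to the rank-one computation, obtaining $\sh{\calO((\GLn\times\Spec\bbC[X,Y]/(XY))/\Gm)}$. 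On the other side, elementary linear algebra shows that the fibres of $S\to\gl\sslash\Gl\times\hl\sslash\Hl$ are as follows: $\Gl$ over the generic point, $(\GLn\times\Spec\bbC[X,Y]/(XY))/\Gm$ over the resultant divisor, and $\GLn$ over the discriminant divisor of $\gln\sslash\GLn$. So the map is an isomorphism in codimension one, hence an isomorphism by a Hartogs argument, using the freeness of both sides (for $\calO(\sh{S})$ this is the flatness you already invoke).

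Finally, your Step 3 assumes what has to be shown for the Hecke statement. You must prove that the map $\Sym(\sh{\hl})\to\mathcal{A}$ induced by the $H$-action through Lysenko's functor factors through $\calO(\sh{S})$ compatibly with your generators. This needs three inputs:
\begin{enumerate}
\item the vanishing $\Ext^{<m-n+1}(\dV\cn\std_n,\dV)=\Ext^{<m-n+1}(\dV,\dV\cn\std_n)=0$ with one-dimensional $\Ext^{m-n+1}$, so that the image of $\glm$ passes through only one copy of $\std_n$ and one of $\std_n^\vee$;
\item the triangularity and colinearity along diagonals of the components $f_{i,j}$ of the $(m-n)\times(m-n)$ block, which is what produces the Slodowy shape;
\item the coincidence of the images of $H^2(\Pn)$ and $H^2(\Pm)$ in $\Ext^2(\dV\cn\std_n,\dV\cn\std_n)$, which matches the $\gln$ block with the $G$-action.
\end{enumerate}
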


For the case $ G=H=\GLn $, the space $ S $ in the equivalence is $ \gl^*\oplus\std\oplus\std^* $ and this result is claimed by Tsao-Hsien Chen and Jonathan Wang. 
\subsection{Relative Langlands duality}
In \cite{GP92}, Gross and Prasad proposed the problem of restricting representations of $\SO_n$ to $\SO_{n-1}$. For irreducible representations $\pi_1$ of $\SO_n$ and $\pi_2$ of $\SO_{n-1}$, to find the multiplicity of trivial representation in $\pi_1\boxtimes\pi_2$ as a representation of $\SO_{n-1}$ requires to calculate the matrix coefficients $\int_{\SO_{n-1}}\langle\pi_1\boxtimes\pi_2(g)v,v^\vee\rangle\diff g$. In \cite{II10}, authors proved that when $v$ is spherical, this is equal to
\begin{equation*}
	\Delta_{\SO_n}\dfrac{L(\frac12,\pi_1\boxtimes\pi_2,\std)}{L(0,\pi_1\boxtimes\pi_2,\Ad)},
\end{equation*}
where $\Delta_{\SO_n}$ is a constant. $\std$ is the standard representation of Langlands dual group of $\SO_{n-1}\times\SO_n$, and $\Ad$ is the adjoint representation. 

Sakellaridis and Venkatesh \cite{SV17} conjectured a generalized result regarding a group $G$ and its spherical variety $X$, which are $\SO_{n-1}\times\SO_n$ and $\SO_{n-1}\quo\SO_{n-1}\times\SO_n$ in the previous discussion. In \cite{Sak13}, Sakellaridis gave the description of $C_c^\infty(X_F)^{\GO}$ under this framework. The categorical version of this conjecture, proposed by Ben-Zvi, Sakellaridis and Venkatesh in \cite{BZSV}, is as follows: the category $D_{\GO}(X_F)$ is equivalent to $\QCoh_\perf^{G_X^\vee}(\sh{V_X})$ for some group $G_X^\vee \rightarrow \Gl$ and its representation $V_X$ with a compatible grading. 

This categorical equivalence for $(G,X)=(\GL_{n-1}\times\GL_n,\GL_{n-1}\quo\GL_{n-1}\times\GL_n)$ is proved in \cite{BFGT}, and the case for $(G,X)=(\SO_{n-1}\times\SO_n,\SO_{n-1}\quo\SO_{n-1}\times\SO_n)$ is proved in \cite{BFT1}. 

For GGP problem of Bessel case, i.e., $ \SO_n $ and $ \SO_m $ when $ m-n $ is odd, the Jacobi group $ J=\SO_n\ltimes U_{m,n}^\SO\subset \SO_n\times\SO_m $ is used. In the case $ G=\SO_n\times\SO_m, X=J\quo G $, it is expected that $ G_X^\vee=\Gl $ and $ V_X $ is the standard representation. The case for $ G=\GLm\times\GLn, X=\GLn\ltimes U_{m,n}^\GL\quo G $ is proved in \cite{TR23}. 

In the framework of \cite{BZSV}, relative Langlands duality is between the pairs $ (G,M=T^*X) $ and $ (\Gl,M^\vee=V_X\times^{G_X^\vee}\Gl) $. Our result verifies $ (G^{\vee\vee}, M^{\vee\vee})=(G,X) $ in the Bessel period case and general linear group case. In fact, one can verify that $ T^*(\SO_m/_fU_{m,n})=\SO_m\times^{U_{m,n}}(f+U_{m,n}^\perp)=\SO_m\times S $. 

Note that the construction of dual space in \cite{BZSV} is not self-dual a priori. For example, it is not clear if $ M^\vee $ is hypersphrical for a general $ M $. 
\subsection{Connection with Coulomb branches}
In \cite{BFN18}, the authors give a mathematical definition of Coulomb branch $\mathcal{M}_{G,N}$ for a group $G$ and its representation $N$ and showed that it only depends on the symplectic representation $T^*N=N\oplus N^*$. 

Recently, \cite{noncot} gives the construction for any symplectic representation. The method is by the geometric Weil representation. For the metaplectic group $\Mp(V)$, consider its $\Mp(V)_\mathcal{O}$-equivariant Weil representation category $\WeilGH$ and the special object $\IC_0$. Then we obtain an algebraic object as inner $\Hom$ of $\IC_0$ in the derived Satake category of $\Mp(V)$. 

In the case of dual pair $ (\GL_n,\GL_m) $ or $(\SO_{2n},\Sp_{2m})$, the anomaly condition in \cite{noncot} is satisfied. The $!$-pullback gives a genuine object in the derived Satake category of $\SO_{2n}\times\Sp_{2m}$. So we can take global section to get an algebra and the Coulomb branch of group $\SO_{2n}\times\Sp_{2m}$ and its representation $\std\otimes \std$. 

Furthermore, the $!$-pullback of an inner $\Hom$ is still an inner $\Hom$, the above construction is exactly considering the inner $\End$ of $\IC_0\in\WeilGH$. From the equivalence of categories $\WeilGH\simeq \QCoh_\perf(\sh{S}/\Gl)$, which identifies $\IC_0$ and the structure sheaf of $\sh{S}/\Gl$. Then the inner $\Hom$ of $\calO_{\sh{S}/\Gl}$ in $\QCoh(\gl^*[2]/G\times\hl^*[2]/H)$ is just the pushforward of $\calO_{\sh{S}/\Gl}$. 

By \cite{BF08}, taking equivariant cohomology as $H^*_{\GO\times\HO}(\pt)$-module refers to the pullback along Kostant section $\Kos_\gl\times\Kos_\hl \rightarrow \gl^*[2]/\Gl\times\hl^*[2]/\Hl$. Hence the coulomb branch in this case is
\begin{equation*}
	S/\Gl\stimes_{\gl^*[2]/\Gl\times\hl^*[2]/\Hl}(\Kos_\gl\times\Kos_\hl). 
\end{equation*}

In \cite{NT17}, the authors showed that the Coulomb branch associated with a quiver of affine type $ A $ with Cherkis bow varieties. If we apply this result to the following quiver: 
\begin{center}
\begin{tikzpicture}
	\draw (0,0) circle (1);
	\fill (30:1) circle (1pt);
	\fill (150:1) circle (1pt);
	\fill (270:1) circle (1pt);
	\node at (1.2,0.6) {$ m $};
	\node at (-1.2,0.6) {$ n $};
	\node at (0,-1.3) {$ 0 $};
\end{tikzpicture}
\end{center}
we get 
\begin{equation*}
	((\GLm\times \Kos_\GLm)\times(\GLm\times S)\times(\GLn\times\Kos_\GLn))\ssslash(\GLm\times\GLn), 
\end{equation*}
where $ \ssslash $ means the Hamiltonian quotient. This is exactly what is stated above in the case of $ (G,H)=(\GLn,\GLm) $. 
\section{Definition of the categories}
\subsection{Notations}
Let $ k $ be an algebraically closed field used in the definition of geometric object. Let $ \Lambda=\overline{\mathbb{Q}_\ell} $ or $ \bbC $ be the field of the coefficient of sheaves. $ \psi\colon k \rightarrow \Lambda^\times $ is a non-trivial character. Then we get the Artin-Schreier sheaf $ \calL_\psi\in D(\mathbb{A}^1) $. In the case $ k=\Lambda=\bbC $, this is the exponential D-module. 

$ F=k((t)) $ is the field of Laurent series, and $ \calO=k[[t]] $ is the ring of integers in $ F $. $ \psi $ naturally extends to a character of $ F $ via residue: $ \psi\colon F\xrightarrow{\res}k\xrightarrow{\psi}\Lambda^\times $. 

When $ V $ is a symplectic vector space, use $ \omega\colon V\times V\rightarrow k $ to denote the symplectic pairing
. It naturally extends to a symplectic pairing on $ V_F $: 
\begin{equation*}
	V_F\times V_F\rightarrow F\xrightarrow{\res}k,
\end{equation*}
which also gives a pairing on $ t^{-r}V_\calO/t^rV_\calO $. By abuse of notation, we still use $ \omega $ to denote them. 

For an algebraic group $ G $, define $ \Gr_G=G_F/G_\calO $ be the affine Grassmannian of $ G $ and define $ \Sat_G=D_{\GO}(\Gr_G) $ be the derived Satake category. Moreover, for positive integer $ n $, let $ G_n $ be the group defined by $ G_n(R)=G(R[[t]]/(t^n)) $. 

If needed, we assume our categories are $ (\infty,1) $-categories. By saying derived category, we mean stable $ (\infty,1) $-categories. For a derived category $ \mathcal{C} $ with certain t-structure, we use $ \mathcal{C}^\heartsuit $ to denote the heart of this t-structure. 
\subsection{Schr\"odinger model}
In the general linear group case, the vector space $ V $ has a polarization $ V=T^*L $ such that $ L=\Hom(V_1,V_2) $ is a representation of $ G\times H $. In this case, the Weil representation can be identified with $ \GO\times \HO $-equivariant sheaves on $ L_F $. More concretely, it is defined as a colimit of categories of the diagram: 
\begin{equation*}
	\cdots\rightarrow D_{G_{2r}\times H_{2r}}(t^{-r}L_\calO/t^rL_\calO)\rightarrow D_{G_{2r+2}\times H_{2r+2}}(t^{-r-1}L_\calO/t^{r+1}L_\calO)\rightarrow \cdots.
\end{equation*}
The arrows are given by $ i_*p^\dagger=i_*p^*[\dim L] $, where $ p\colon t^{-r}L_\calO/t^{r+1}L_\calO \rightarrow t^{-r}L_\calO/t^rL_\calO$ is the projection and $ i\colon t^{-r}L_\calO/t^{r+1}L_\calO \rightarrow t^{-r-1}L_\calO/t^{r+1}L_\calO$ is the inclusion. The degree is chosen such that the middle perverse t-structure is preserved. 
\subsection{Lattice model}
When the case $ V $ is possibly not canonically split, the above construction lacks the equivariance structure. We propose another approach through the so-called lattice model. We first explain our construction through the finite case. 
\subsubsection{Finite case}
Pick any Lagrangian $ L\subset V $, we can think of $ L $ as a group acting on $ V $ via addition. Then we have a relative character on $ L $: $ L\times V\xrightarrow{\psi\circ\omega}\Lambda^\times $ and corresponding sheaf $ \omega^*\calL_\psi $. Call a sheaf $ \calF $ is $ (L,\psi) $-equivariant if we have an isomorphism 
\begin{equation*}
	\mathrm{act}^*\calF\cong \mathrm{proj}^*\calF\otimes\omega^*\calL_\psi. 
\end{equation*}
Hence we can form the category $ D(V/(L,\psi)) $ of $ (L,\psi) $-equivariant sheaves on $ V $. 
\subsubsection{Local case}
Consider the $ \GO\times \HO $-stable Lagrangian $ \VO \subset V_F $. To mimic the finite case, we want a category $ D(V_F/(\VO,\psi)) $. As the colimit of finite cases, we define this category as the colimit of the following diagram: 
\begin{equation*}
	\cdots \rightarrow D((t^{-r}\VO/t^r\VO)/(\VO/t^r\VO,\psi))\xrightarrow{i_*p^\dagger} D((t^{-r-1}\VO/t^{r+1}\VO)/(\VO/t^{r+1}\VO,\psi))\rightarrow \cdots.
\end{equation*}
Even $ G_r $ can act on the space $ t^{-r}\VO/\VO $, it cannot act on $ (\VO/t^r\VO,\psi) $-equivariant sheaves on $ t^{-r}\VO/t^r\VO $. Rather, we only have the action of $ G_{2r} $. Hence the unramified Weil representation $ D_{\GO\times\HO}(V_F/(\VO,\psi)) $ is the colimit of the following diagram: 
\begin{multline*}
	\cdots \rightarrow D_{G_{2r}\times H_{2r}}((t^{-r}\VO/t^r\VO)/(\VO/t^r\VO,\psi))\rightarrow\\
	\rightarrow D_{G_{2r+2}\times H_{2r+2}}((t^{-r-1}\VO/t^{r+1}\VO)/(\VO/t^{r+1}\VO,\psi))\rightarrow \cdots.
\end{multline*}
We will define the Hecke action in the next section. 
\subsection{Fourier transform}
While the lattice model is defined without the assumption of $ V $ having a polarization, we want to show this construction is equivalent to the Schr\"odinger model in polarizable case. 

By the colimit description of the category, it suffices to show $ D(t^{-r}L_\calO/t^rL_\calO) $ is equivalent to $ D((t^{-r}\VO/t^r\VO)/(\VO/t^r\VO,\psi)) $. By taking Fourier transform, we know the latter is equivalent to $ D((t^{-r}\VO/t^r\VO)/(t^{-r}L_\calO/t^rL_\calO,\psi)) $. Hence it suffices to show the following statement: 
\begin{prop}
	If a particular splitting of the short exact sequence $ 0\rightarrow L\rightarrow V\rightarrow V/L\rightarrow 0 $ is chosen, we get a non-canonical equivalence of categories
	\begin{equation*}
		D(V/(L,\psi))\cong D(V/L). 
	\end{equation*}
	If the splitting preserves $ G $-action, we have $ D_G(V/(L,\psi))\cong D_G(V/L).  $
\end{prop}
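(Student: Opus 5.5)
A splitting $s\colon V/L\to V$ identifies $V\cong L\times L'$ with $L'=s(V/L)$. Since $L$ is Lagrangian, $L'$ pairs perfectly with $L$. The plan is to twist away the nontrivial character on the $L$-action, so that $(L,\psi)$-equivariance becomes ordinary $L$-equivariance. Ordinary $L$-equivariant sheaves on $V$ are then sheaves on $V/L$, because $L$ is unipotent, hence contractible, and acts freely.

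Write $v=l+l'$ and define $\phi\colon V\to \bbA^1$ by $\phi(l+l')=\tfrac12\omega(l,l')$ (or $\omega(l,l')$, depending on the sign convention in the action isomorphism). Set $\calL_\phi=\phi^*\calL_\psi$. Using bilinearity and $\omega|_{L\times L}=0$, one checks
\begin{equation*}
	\phi(a+v)-\phi(v)=\omega(a,v)\quad\text{(up to the chosen normalization) for } a\in L,\ v\in V.
\end{equation*}
Since $\calL_\psi$ is a character sheaf, $\add^*\calL_\psi\cong \calL_\psi\boxtimes\calL_\psi$, and this identity gives
\begin{equation*}
	\act^*\calL_\phi\cong \proj^*\calL_\phi\otimes\omega^*\calL_\psi .
\end{equation*}
The isomorphism satisfies the cocycle condition, because it comes from an identity of functions together with the multiplicative structure of $\calL_\psi$. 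So $\calL_\phi$ is itself an $(L,\psi)$-equivariant rank one local system.

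The functor is $\calF\mapsto \calF\otimes\calL_\phi^{-1}$, where $\calL_\phi^{-1}=\phi^*\calL_{\psi^{-1}}$. Tensoring with $\calL_\phi^{-1}$ exchanges $(L,\psi)$-equivariant sheaves with $(L,\text{trivial})$-equivariant ones, and tensoring back with $\calL_\phi$ is the inverse. One must check this respects the higher coherence data. For a unipotent group acting freely, equivariance is a property rather than a structure, so it suffices to check the isomorphism on objects. Finally, $D_L(V)\cong D(V/L)$ by descent along the $L$-torsor $V\to V/L$, which here is even trivial. The equivalence depends on $\phi$, hence on $s$, which explains why it is non-canonical.

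For the $G$-equivariant statement, suppose $s$ is $G$-equivariant, so that $L'$ is $G$-stable. Then $\phi$ is $G$-invariant, since $\omega$ is $G$-invariant and the decomposition is $G$-stable, and hence $\calL_\phi$ carries a canonical $G$-equivariant structure. The twist functor therefore lifts to $D_G$. The main technical point is making precise that $(L,\psi)$-equivariance is a full subcategory condition, that is, a property. In the $\infty$-categorical setting I would handle this by noting that $L$ is an affine space, so $D(V/(L,\psi))$ is the full subcategory of $D(V)$ defined by the twisted invariance condition. This step is routine but is where care about coherence is needed.
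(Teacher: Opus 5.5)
Your proposal is correct and is essentially the paper's argument: the paper twists by the Artin--Schreier sheaf pulled back along $(l,\bar v)\mapsto\omega(l,\bar v)$ on the model $L\times V/L$, then transports through the isomorphism $V\cong L\times V/L$ given by the splitting. That twist is exactly your $\calL_\phi$ with $\phi(l+l')=\omega(l,l')$, which is the normalization you should commit to, since $\tfrac12\omega(l,l')$ would produce the character $\tfrac12\omega$ rather than $\omega$; your coherence remarks (equivariance under the contractible group $L$ being a property, and $G$-invariance of $\phi$ for a $G$-stable complement) fill in details the paper leaves implicit.
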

\begin{proof}
	Consider the space $ L \times V/L $. It carries an $ L $-action by $ L \times L \times V/L \rightarrow L\times V/L $ by $ (l_1,l_2,v+L)\mapsto (l_1+l_2,v+L) $. From the map $ L \times L \times V/L \rightarrow \mathbb{A}^1, (l_1,l_2,v+L)\mapsto \omega(l_1,v) $, we can define $ (L,\psi) $-equivariant sheaves on $ L\times V/L $. 
	
	Then we have the canonical equivalence $ D(V/L)\simeq D((L\times V/L)/L)\simeq D((L\times V/L)/(L,\psi)) $, where the second is given by $ \calF\mapsto \calF\otimes \calL_\psi $. This comes from $ \calL_\psi $ is $ (L,\psi) $-equivariant, as $ \omega(l_1+l_2,v)=\omega(l_1,v)+\omega(l_2,v). $
	
	For a given section $ V/L\rightarrow V$, we get a non-canonical isomorphism $ V \cong L \times V/L $. This isomorphism makes the following diagram commutes: 
	\begin{equation*}
		\begin{tikzcd}
			\mathbb{A}^1 \ar[d,equal]&L\times V\lar\ar[r,shift left,"\act"]\ar[r,shift right,"\proj"']\ar[d,"\cong"]&V\ar[d,"\cong"]\\
			\mathbb{A}^1&L\times L\times V/L\lar\ar[r,shift left,"\act"]\ar[r,shift right,"\proj"']&L\times V/L
		\end{tikzcd}
	\end{equation*}
	This gives the equivalence $ D(V/(L,\psi))\cong D((L\times V/L)/(L,\psi)) $. 
	
	If the $ G $-action preserves the isomorphism $ V\cong L\times V/L $, the above equivalences preserves $ G $-actions. 
\end{proof}
\section{Irreducible objects}
\subsection{Cotangent space}
Here we compute $ T^*(V/(L,\psi)) $. The character $ \omega $ induces a map $ \mathbb{A}^1\times V\rightarrow \mathrm{Lie}(L)^*\simeq L^* $ given by $ V\xrightarrow{\omega} V^* \rightarrow L^* $. The moment map of $ L $-action $ T^*V\rightarrow L^* $ is given by $ (v,v^*)\mapsto (l\mapsto \langle l,v^*\rangle) $. Its fiber at $ 1\in\mathbb{A}^1 $ is 
\begin{equation*}
	T^*V\times_{L^*\times V}({1}\times V)=\{(v,v^*):\omega(v)|_L=v^*|_L\}=\{(v,v^*):v-\omega^{-1}(v^*)\in L\}. 
\end{equation*}
Here the last equation uses the fact that $ L $ is a Lagrangian, i.e., 
\begin{equation*}
	0\rightarrow L\rightarrow V\simeq V^*\rightarrow L^* \rightarrow 0
\end{equation*}
is an exact sequence. Hence we have 
\begin{equation*}
	T^*(V/(L,\psi))=(T^*V\times_{L^*\times V}({1}\times V))/L\simeq V. 
\end{equation*}
Similarly, we should expect $ T^*(V_F/(\VO,\psi))\simeq V_F $. In fact, we see the singular support of sheaves in $ D(V_F/(\VO,\psi)) $ lies in the colimit of the sets
\begin{equation*}
	\cdots\rightarrow\{L\subset t^{-r}\VO/t^r\VO\mbox{ is Lagrangian}\}\xrightarrow{p^*i_*} \{L\subset t^{-r-1}\VO/t^{r+1}\VO\mbox{ is Lagrangian}\}\rightarrow \cdots,
\end{equation*}
which is Lagrangians in $ V_F $ that contains some $ t^N\VO $. 

Then we consider the behavior of $ \GO $-action on sheaves to its singular support. 
\begin{prop}
	The moment map of the $ \GO $-action is given by $ V_F\rightarrow \mathfrak{g}_\calO^*, v\mapsto (g\mapsto\omega(v,gv)) $. 
\end{prop}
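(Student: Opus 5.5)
The plan is to compute the moment map of a linear symplectic action in the standard way and then transport it through the identification $T^*(V_F/(\VO,\psi))\simeq V_F$ obtained above. The claimed formula presumably holds up to a factor $\tfrac12$ and a sign, fixed by normalizing $\omega$. I first work in the finite-dimensional model $t^{-r}\VO/t^r\VO$ and pass to the colimit at the end.

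\textbf{Finite case.} Take a group $K$ acting linearly on the symplectic space $V$, preserving $\omega$, and consider the induced action on $T^*V=V\times V^*$. Its moment map is $(v,v^*)\mapsto (\xi\mapsto \langle v^*,\xi v\rangle)$ for $\xi\in\mathrm{Lie}(K)$. Now restrict this to the fiber $\{(v,v^*): v-\omega^{-1}(v^*)\in L\}$ that computes $T^*(V/(L,\psi))$.

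\begin{itemize}
\item \textbf{Choice of representative.} I would choose the representative with $v^*=\omega(v)$, that is, $v^*=\omega(v,\cdot)$, up to the sign convention. This choice is allowed because the fiber modulo $L$ is identified with $V$ precisely by the map $(v,v^*)\mapsto \omega^{-1}(v^*)$ together with the $L$-shift.
\item \textbf{Value of the moment map.} On this representative the moment map becomes $\xi\mapsto\omega(v,\xi v)$.
\item \textbf{Independence of the representative.} One must check that the value does not depend on the representative modulo $L$. Changing $v$ to $v+l$ with $l\in L$ changes the expression by $\omega(l,\xi v)+\omega(v,\xi l)+\omega(l,\xi l)$.
\end{itemize}

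The cross terms combine to $2\omega(v,\xi l)$, using the infinitesimal invariance $\omega(\xi a,b)+\omega(a,\xi b)=0$. So this difference is \emph{not} zero in general.

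\textbf{Main obstacle.} The point is that $K$ does not preserve the character $\psi\circ\omega|_L$ naively. In the local setting this is exactly why only $G_{2r}$, rather than $G_r$, acts.

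To resolve this, I would redo the computation honestly. The equivariant structure of $\omega^*\calL_\psi$ for the group $K\ltimes L$ contributes a correction term to the moment map. Concretely, I would use the Hamiltonian reduction in stages:
\begin{itemize}
\item the $K\ltimes L$ moment map on $T^*V$ is $(v,v^*)\mapsto(\xi\mapsto\langle v^*,\xi v\rangle,\ v^*|_L)$;
\item reducing at the character $\omega|_L$ replaces the $K$-component by the $L$-invariant quadratic function $v\mapsto \tfrac12\omega(v,\xi v)$ on $V$.
\end{itemize}
Invariance of this quadratic function follows from $\omega(\xi v,v)$ being symmetric under $\xi$ in $\mathfrak{sp}$. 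The correction coming from the quasi-character $\omega(l,v)$ cancels the cross terms above.

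\textbf{Passing to the colimit.} For the local statement I apply this to $K=G_{2r}$ acting on $t^{-r}\VO/t^r\VO$ with $L=\VO/t^r\VO$. I then check compatibility with the transition maps $i_*p^\dagger$, whose effect on singular supports is the map $p^*i_*$ described above. This compatibility holds because $\omega(v,gv)$ is computed via the residue pairing and is unchanged by enlarging $r$. Taking the colimit gives $V_F\to\mathfrak{g}_\calO^*$, $v\mapsto(g\mapsto\omega(v,gv))$, up to the normalizing scalar.
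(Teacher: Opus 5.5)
\textbf{Verdict: genuine gap.} The finite case is the whole content of the proposition, and your proposal does not establish it; the obstacle you identify is not real.

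\textbf{What goes wrong.} Your independence check compares the representatives $(v,\omega(v))$ and $(v+l,\omega(v+l))$. In effect it asks whether $q_\xi(v)=\omega(v,\xi v)$ is $L$-invariant on $V$. But these two pairs lie over \emph{different} points of the reduced space. $T^*(V/(L,\psi))$ is identified with $V$, not with $V/L$, so there is no reason for the moment map to be $L$-invariant as a function on $V$. Indeed, the answer $v\mapsto(g\mapsto\omega(v,gv))$ on $V_F$ is not $\VO$-invariant.

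Your proposed repair is then false. The function $v\mapsto\tfrac12\omega(v,\xi v)$ is not $L$-invariant: by your own computation it changes by $\omega(v,\xi l)\neq 0$. The symmetry of $(a,b)\mapsto\omega(a,\xi b)$ for $\xi\in\mathfrak{sp}$ is exactly what combines the cross terms into that nonzero quantity; it does not cancel them.

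The diagnosis is also off. $K$ does preserve the twist, since $\omega(kl,kv)=\omega(l,v)$. The $G_{2r}$-versus-$G_r$ issue is only that $G_r$ does not act on $t^{-r}\VO/t^r\VO$.

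\textbf{The missing idea: use the correct $L$-orbits.} Work in the fiber $\{(v,v^*): v-\omega^{-1}(v^*)\in L\}$. Because sheaves are twisted by $\omega(l,v)$, $L$ does not act there by the plain cotangent lift $(v,v^*)\mapsto(v+l,v^*)$. Instead it acts with a covector shift coming from the differential of the twist, so $v$ and $\omega^{-1}(v^*)$ move by $l$ and $-l$ respectively.

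Consequently a point $w\in V$ corresponds to the orbit $\{(w+l,\omega(w-l)):l\in L\}$, and the identification with $V$ is $(v,v^*)\mapsto\tfrac12(v+\omega^{-1}(v^*))$. Evaluate the \emph{unmodified} moment map $\langle v^*,\xi v\rangle$ on this orbit:
$\omega(\xi(w+l),w-l)=\omega(\xi w,w)-\omega(\xi w,l)+\omega(\xi l,w)-\omega(\xi l,l)$.
The two middle terms cancel by infinitesimal invariance of $\omega$. The last term vanishes because $\xi L\subset L$ and $L$ is Lagrangian.

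So the value at your representative $l=0$ is correct, but it is justified by constancy along these orbits, not by $L$-invariance on $V$. No correction to the $K$-component and no factor $\tfrac12$ are needed. This is the paper's argument. Once it is in place, your colimit step goes through as you describe.
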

\begin{proof}
	First, for the finite case, if a group $ G $ acts on the symplectic space $ (V,\omega) $ and fixes the Lagrangian $ L $, we show the moment map of $ G $-action on $ V/(L,\psi) $ is by $ V\rightarrow \mathfrak{g}^*, v\mapsto(g\mapsto\omega(v,gv)) $. 
	
	The moment map of $ G $-action on $ V $ is by $ T^*V\rightarrow \mathfrak{g}, (v,v^*)\mapsto(gv,v^*) $. It restricts to a map from $ T^*V\times_{L^*\times V}({1}\times V) $. The isomorphism $ T^*V\times_{L^*\times V}({1}\times V)\simeq V $ is given by $ (v,v^*)\mapsto \frac12(v+\omega^{-1}(v^*)) $ or $ v\mapsto \{(v+l,\omega(v-l))\}/L $. Hence the image of $ V $ under the moment map is $ g\mapsto\omega(g(v+l),v-l)=\omega(gv,v). $
	
	Then, for the local case, we have moment maps $ t^{-r}\VO/t^r\VO \rightarrow \mathfrak{g}_{2r}^*,v\mapsto(g\mapsto\omega(v,gv)) $. It is clear they are compatible for different $ r $. By taking colimit, we get the desired moment map $ V_F\rightarrow \mathfrak{g}_\calO^* $. 
\end{proof}
\subsection{Singular support}
The above result is compatible with the singular support calculated using Schr\"odinger models. 

Recall Lysenko's definition of the heart of derived Weil representation in \cite{Ly11} as follows:
\begin{defn}
	Set $ V_i=U_i\oplus U_i^* $. Let $ P_G $ be the parabolic subgroup of $ G $ preserving $ U_1 $, and $ Q_G=\GL(U_1) $ be the standard Levi factor of $ P_G $. Let $ P_H $ be the parabolic subgroup of $ H $ preserving $ U_2 $, and $ Q_H=\GL(U_2) $ be the standard Levi factor of $ P_H $. 
	
	Let $ \Upsilon=\Hom(U_1,V_2) $, and $ \Pi=\Hom(V_1,U_2) $. 
	
	The category of Weil representation $ \Weil^\heartsuit_{G,H} $ is the category of triples $ (\calF_1,\calF_2,\beta) $, where $ \calF_1\in \Perv_{(Q_G\times H)_\calO}(\Upsilon(F)),  \calF_2\in\Perv_{(G\times Q_H)_\calO}(\Pi(F)) $, and $ \beta\colon\zeta(f(\calF_1))\xrightarrow{\sim}f(\calF_2) $ is an isomorphism for the diagram 
	\begin{equation*}
		\begin{tikzcd}
			\Perv_{(Q_G\times H)_\calO}(\Upsilon(F))\dar{f}&\Perv_{(G\times Q_H)_\calO}(\Pi(F))\dar{f}\\
			\Perv_{(Q_G\times Q_H)_\calO}(\Upsilon(F))\rar{\zeta}&\Perv_{(Q_G\times Q_H)_\calO}(\Pi(F))
		\end{tikzcd}
	\end{equation*}
	Where $ f $ are forgetful functors, $ \zeta $ is the partial Fourier transform. 
\end{defn}

The Fourier transform preserves singular support, so the singular support of $ f(\calF_1)\in T^*\Upsilon(F)=V(F) $ is the same as the singular support of $ f(\calF_2)\in T^*\Pi(F)=V(F) $. As $ \calF_1 $ is an element in $ \Perv_{(Q_G\times H)_\calO}(\Upsilon(F)) $, it is $ H_\calO $-equivariant. Thus $ \SingSupp(\calF_1) $ is in the zero fiber of moment map to $ \mathfrak{h}^*_\calO $ and $ H_\calO $-equivariant. Similarly for $ \SingSupp(\calF_2) $. 

In conclusion, we see the singular support of $ (\calF_1,\calF_2,\beta) $ is inside the set 
\begin{equation*}
	\{v\in V(F):v^*v\in\mathfrak{g}_\calO,vv^*\in\mathfrak{h}_\calO\}.
\end{equation*}

\subsection{Relevant orbits}
If a $ (\VO,\psi) $-equivariant sheaf on $ V_F $ is $ \GO $-equivariant, its singular support must be contained in the preimage of $ 0\in\mathfrak{g}_\calO^* $. 

Any section $ V_F/V_\calO \rightarrow V_F $ induces a non-canonical equivalence $ D(V_F/(V_\calO,\psi)) $ with $ D(V_F/V_\calO) $, which does not preserve $ \GO $-action. However, by singular support calculation, we can still determine when a $ \GO $-orbit on $ V_F/V_\calO $ that could occur as the support of an irreducible object in $ D_{\GO}(V_F/(V_\calO,\psi)) $. 

\begin{prop}\label{GL index}
	Let $ V=\Hom(\bbC^n,\bbC^m) $ and $ n\le m $. Consider the subset 
	\begin{equation*}
		\{(v,v^*):v^*v\in\gln(\calO),vv^*\in\glm(\calO)\}\subset V(F)\times V^*(F)
	\end{equation*}
	and its image in $ V(F)/V(\calO)\times V^*(F)/V^*(\calO) $. Under suitable $ \GLn(\calO)\times\GLm(\calO) $-action, any element in the quotient can be conjugate to 
	\begin{equation}\label{GLn}
		\left(\begin{pmatrix}
			\diag(t^{-a_1},\ldots,t^{-a_r})&0\\0&0
		\end{pmatrix},\begin{pmatrix}
		0&0\\0&\diag(t^{-b_1},\ldots,t^{-b_s})
	\end{pmatrix}\right)
	\end{equation}
	for $ r+s\le n,a_1\ge\cdots\ge a_r\ge 1,b_s\ge\cdots\ge b_1\ge 1 $. 
\end{prop}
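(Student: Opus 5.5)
The plan is to use the elementary divisor theory over the discrete valuation ring $\calO$ twice: first to normalize $v$, then to normalize the part of $v^*$ left free by the integrality conditions. Throughout, $v$ is an $m\times n$ matrix, $v^*$ is an $n\times m$ matrix, and $(g,h)\in\GLn(\calO)\times\GLm(\calO)$ acts by $(v,v^*)\mapsto(hvg^{-1},gv^*h^{-1})$. This action preserves the subset $\{v^*v\in\gln(\calO),\,vv^*\in\glm(\calO)\}$, since it conjugates $v^*v$ by $g$ and $vv^*$ by $h$, and these conjugations preserve integrality. It also preserves the lattices $V(\calO)$ and $V^*(\calO)$, so it descends to the quotient.

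\textbf{Step 1: normalize $v$ exactly.} I will work with an honest lift $(v,v^*)$ in the subset, not with classes. This avoids the problem that changing $v$ by an integral matrix could destroy the integrality conditions. By the Smith normal form over $\calO$ (applied to $t^Nv$ for $N\gg0$), there are $g,h$ with $hvg^{-1}$ equal on the nose to the $m\times n$ matrix $\begin{pmatrix}D'\\0\end{pmatrix}$, where $D'=\diag(t^{c_1},\ldots,t^{c_n})$ and $c_1\le\cdots\le c_n$, allowing $c_i=\infty$ for zero entries. Write $a_i=-c_i$ for the $r$ indices with $c_i\le -1$, so $a_1\ge\cdots\ge a_r\ge1$. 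The remaining entries of $D'$ are integral, so modulo $V(\calO)$ the class of $v$ is $\begin{pmatrix}D&0\\0&0\end{pmatrix}$ with $D=\diag(t^{-a_1},\ldots,t^{-a_r})$. We replace $(v,v^*)$ by its translate, which still lies in the subset.

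\textbf{Step 2: use the integrality conditions to kill part of $v^*$.} Write $v^*$ in block form with respect to the splittings $n=r+(n-r)$ and $m=r+(m-r)$.
\begin{itemize}
\item The $i$-th column of $v^*v$ (for $i\le r$) is $t^{-a_i}$ times the $i$-th column of $v^*$. Integrality of $v^*v$ therefore forces the first $r$ columns of $v^*$ to lie in $t^{a_i}\calO\subset\calO$.
\item Symmetrically, the $i$-th row of $vv^*$ (for $i\le r$) is $t^{-a_i}$ times the $i$-th row of $v^*$. Integrality of $vv^*$ forces the first $r$ rows of $v^*$ to be integral.
\end{itemize}
Hence modulo $V^*(\calO)$ the class of $v^*$ is $\begin{pmatrix}0&0\\0&w\end{pmatrix}$, with $w$ an arbitrary $(n-r)\times(m-r)$ matrix over $F$. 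This is the only point where the conditions $v^*v\in\gln(\calO)$ and $vv^*\in\glm(\calO)$ are used.

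\textbf{Step 3: normalize $w$ using the stabilizer of the class of $v$.} Now pass to the quotient. The subgroup $\GL_{n-r}(\calO)\times\GL_{m-r}(\calO)$, embedded block-diagonally into the lower-right blocks, fixes the class $\begin{pmatrix}D&0\\0&0\end{pmatrix}$. It does not fix $v$ itself, since the lower block of $v$ may have integral entries, but it does fix $v$ modulo $V(\calO)$, which is all we need. This subgroup acts on $w$ by $w\mapsto g'wh'^{-1}$. Applying Smith normal form again, together with permutation matrices in this subgroup, we can move $w$ to a diagonal matrix placed in the last positions, $\diag(t^{-b_1},\ldots,t^{-b_s})$ with $b_s\ge\cdots\ge b_1\ge1$, after discarding integral entries modulo $V^*(\calO)$. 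Since $n\le m$, we have $s\le n-r$, which gives $r+s\le n$ and the form \eqref{GLn}.

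\textbf{Main obstacle.} The delicate point is the interplay between exact lifts and classes modulo lattices. Step 2 must be carried out on a genuine lift whose $v$ is exactly in Smith form. Otherwise an integral perturbation $X$ of $v$ could make the products $Xv^*$ non-integral and the deduction in Step 2 would fail. Step 3, by contrast, must be done only on classes, because the stabilizer of the exact matrix $v$ is smaller than the stabilizer of its class. Keeping these two regimes separate is what makes the argument go through.
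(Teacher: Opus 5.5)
Your proposal is correct and follows essentially the same route as the paper's proof: put $v$ in Smith normal form, use integrality of $v^*v$ and $vv^*$ to make the first $r$ rows and columns of $v^*$ integral, and then diagonalize the remaining $(n-r)\times(m-r)$ block of $v^*$ using $\GL_{n-r}(\calO)\times\GL_{m-r}(\calO)$, which fixes the class of $v$. Your explicit separation between working with an exact lift (for the integrality step) and with classes modulo the lattices (for the final normalization) is a more careful rendering of what the paper does implicitly.
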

\begin{proof}
	By row and column operators on an elements in $ V(F) $, one can make it diagonal, i.e, of the form 
	\begin{equation*}
		\begin{pmatrix}
			\diag(t^{-a_1},\ldots,t^{-a_n})\\0
		\end{pmatrix}
	\end{equation*}
	for $ a_1\ge\cdots\ge a_n $. Let $ r=\max\{i:a_r>0\} $. 
	
	Write $ v^*=(x_{ij})_{1\le i\le n,1\le j\le m} $. The condition $ v^*v\in\glm(\calO) $ and $ vv^*\in\gln(\calO) $ is equivalent to $ x_{ij}\in t^{\max\{a_i,a_j\}}\calO $. Hence $ v^* $ is of the form
	\begin{equation*}
		\begin{pmatrix}
			A_{r,r}&A_{r,m-r}\\A_{n-r,r}&A_{n-r,m-r}
		\end{pmatrix}
	\end{equation*}
	where $ A_{i,j}\in\Mat_{i,j}(F) $ and $ A_{r,r},A_{r,m-r},A_{n-r,r} $ has coefficients in $ t\calO $. 
	
	Next, use $ \GL_{n-r}(\calO)\times\GL_{m-r}(\calO) $ to do row and column operators to make $ A_{n-r,m-r} $ diagonal. Thus we get $ v^*+V^*(\calO) $ is conjugate to $ \begin{pmatrix}
		0&0\\0&\diag(t^{-b_1},\ldots,t^{-b_s})
	\end{pmatrix}+V^*(\calO) $. 
	
	Since $ v\in \begin{pmatrix}
		\diag(t^{-a_1},\ldots,t^{-a_r})&0\\0&0
	\end{pmatrix}+V(\calO) $ and matrices $ \begin{pmatrix}
	1&\\&\GL_{n-r}
\end{pmatrix} $ and $ \begin{pmatrix}
1&\\&\GL_{m-r}
\end{pmatrix} $ fix this set, we know $ v+V(\calO) $ is conjugate to $ \begin{pmatrix}
\diag(t^{-a_1},\ldots,t^{-a_r})&0\\0&0
\end{pmatrix}+V(\calO) $. 
\end{proof}
\begin{cor}
	Let $ n\le m $. The irreducible elements in $ D_{\GLn_\calO\times\GLm_\calO}((T^*V)_F/(T^*V)_\calO) $ is indexed by $ X_\bullet^+(\GLn) $. 
\end{cor}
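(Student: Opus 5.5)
The plan is to derive the corollary from Proposition~\ref{GL index} in three steps. First, reduce the classification of irreducible objects to a classification of relevant orbits. Then show that each relevant orbit carries exactly one irreducible object. Finally, match the relevant orbits with $X_\bullet^+(\GLn)$.

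\textbf{Reduction to orbits.} Choose the $\GLn(\calO)\times\GLm(\calO)$-stable Lagrangian $V_F\subset (T^*V)_F$. By the Fourier transform of Section~2 and the Proposition above, the category is identified with $\GLn_\calO\times\GLm_\calO$-equivariant sheaves on $(T^*V)_F/V_\calO\oplus V^*_\calO$, taken in the colimit sense. At each finite level the equivariant category on $t^{-r}(T^*V)_\calO/t^r(T^*V)_\calO$ is constructible with respect to finitely many orbits. Hence every irreducible object is an intermediate extension $\IC(\mathbb{O},\calL)$, where $\mathbb{O}$ is an orbit and $\calL$ is an irreducible equivariant local system on it that is compatible with the $\psi$-twisted equivariance. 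Such an object is nonzero only when the $\psi$-twist is trivial on the stabilizer. The singular support computation (the moment map proposition together with the subsection on singular support) shows that the support of a nonzero object must meet the locus $\{v^*v\in\gln(\calO),\ vv^*\in\glm(\calO)\}$. Proposition~\ref{GL index} then says every relevant orbit contains a representative of the form \eqref{GLn}.

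\textbf{One object per orbit.} For each representative $x$ in \eqref{GLn}, I will compute the stabilizer $\mathrm{Stab}(x)\subset \GLn(\calO)\times\GLm(\calO)\ltimes(V_\calO\oplus V^*_\calO)$ and check two things. The character obtained by restricting $\psi\circ\omega$ must be trivial on the unipotent part; this is precisely the condition $v^*v,vv^*$ integral. The reductive quotient of the stabilizer is a product of groups $\GL_k$ built from the blocks of equal exponents, and these are connected. It follows that the only relevant equivariant local system on $\mathbb{O}_x$ is the trivial one, so each relevant orbit contributes exactly one irreducible object. I also need that all representatives \eqref{GLn} really are relevant and pairwise non-conjugate. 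For this I would use the invariants given by the elementary divisors of $v$ modulo $V_\calO$ and of $v^*$ modulo $V^*_\calO$.

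\textbf{Matching with coweights.} The data are $r+s\le n$, $a_1\ge\cdots\ge a_r\ge1$ and $b_s\ge\cdots\ge b_1\ge1$. Send such data to the dominant coweight
\begin{equation*}
	\lambda=(a_1,\ldots,a_r,0,\ldots,0,-b_1,\ldots,-b_s)\in X_\bullet^+(\GLn),
\end{equation*}
with $n-r-s$ zeros in the middle. This is a bijection onto $X_\bullet^+(\GLn)$: the positive entries determine $a$, the negative entries determine $b$, and the number of zeros is forced by $r+s\le n$. Here the hypothesis $n\le m$ guarantees that both diagonal blocks fit inside the $n\times m$ and $m\times n$ matrices.

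The main obstacle is the middle step. I must verify that the $\psi$-twisted equivariance condition does not kill any of the orbits in \eqref{GLn}, and that the equivariant fundamental group is trivial, so that no orbit carries extra local systems. I would first work out the case $n=1$ explicitly and then reduce to diagonal blocks using the Levi $\GL_{r}\times\GL_{n-r-s}\times\GL_s$.
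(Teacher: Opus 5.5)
Your proposal is correct in outline, and its skeleton is the paper's: Proposition~\ref{GL index} supplies the representatives, and both arguments use the same dictionary $(a,b)\mapsto(a_1,\ldots,a_r,0,\ldots,0,-b_1,\ldots,-b_s)$. The difference lies in how you show that each representative carries exactly one irreducible object.

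The paper's proof is just the dictionary plus the instruction ``for $\lambda\in X_\bullet^+(\GLn)$, consider $\dV\cG V_\lambda$''. Existence therefore comes from the Hecke action: $\dV\cG V_\lambda$ is a nonzero object supported on the closure of the orbit attached to $\lambda$ and nonzero on that orbit, so the corresponding intersection complex occurs in it. The paper leaves uniqueness of the local system and pairwise non-conjugacy of the representatives implicit.

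You instead verify relevance intrinsically: for the representatives $v^*v=0=vv^*$, so the $\psi$-twist is trivial on the stabilizer. You also supply what the paper omits. Connected stabilizers give one irreducible per orbit, and the elementary divisors of $v \bmod V(\calO)$ and $v^*\bmod V^*(\calO)$ separate the orbits. Your route is self-contained and never uses the metaplectic kernel or the Hecke action. The paper's route has the advantage of exhibiting the irreducibles as constituents of $\dV\cG V_\lambda$, which is how they are used later (the $\Sat_G$-action and the computation of $\OS$).

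Two statements in your reduction step are wrong as written, though nothing downstream uses them.
\begin{itemize}
\item A splitting identifies $D(V_F/(\VO,\psi))$ with $D(V_F/\VO)$ only non-equivariantly for $\GO$, as the paper points out. Also, the Fourier transform of Section~2 lands in the Schr\"odinger model on $\Hom(\bbC^n,\bbC^m)_F$, not on $(T^*V)_F/(T^*V)_\calO$. Simply work with $(\GO\times\HO)\ltimes(T^*V)_\calO$-orbits on $(T^*V)_F$ with the twisted character, which is what you actually do afterwards.
\item There are not finitely many orbits at a given level. Already for $n=m=1$, the product $\alpha\beta$ of $(v,v^*)=(\alpha t^{-1},\beta t^{-1})$ is a continuous invariant on $(t^{-1}\calO/\calO)^2$. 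Only the relevant orbits are finite in number. That suffices, because twisted-equivariant sheaves vanish off them.
\end{itemize}

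Finally, the connectedness you flag as the main obstacle needs no Levi reduction. The stabilizer of $(v,v^*)$ modulo lattices is the unit group of the $\calO$-algebra $\{(X,Y)\in\gln(\calO)\times\glm(\calO): Yv-vX\in V(\calO),\ Xv^*-v^*Y\in V^*(\calO)\}$. At each finite level it is therefore a nonempty open subset of an affine space, hence connected.
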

\begin{proof}
	Just note that the element in (\ref{GLn}) corresponds to $ (a_1,\ldots,a_r,0\ldots,0,-b_1,\ldots,-b_s) $ in $ X_\bullet^+(\GLn)$. 
	
	For $ \lambda\in X_\bullet^+(\GLn) $, consider $ \dV\cG V_\lambda $. 
\end{proof}
\begin{prop}
	Let $ V=\Hom(\bbC^{2n},\bbC^{2m}) $ and $ n\le m $. $ \bbC^{2n}=\bbC^n\oplus(\bbC^n)^* $ is equipped with standard symmetric inner product and $ \bbC^{2m}=\bbC^m\oplus(\bbC^m)^* $ is equipped with standard anti-symmetric inner product. Consider the subset
	\begin{equation*}
		\{v\in V(F):v^*v\in\so_{2n}(\calO),vv^*\in\sp_{2m}(\calO)\}
	\end{equation*}
	and its image in $ V(F)/V(\calO) $. Under suitable $ \Orth_{2n}(\calO)\times\Sp_{2m} $-action, any element in the quotient can be conjugate to 
	\begin{equation*}
		\begin{pmatrix}
			\diag(t^{-a_1},\ldots,t^{-a_r})&0&0\\0&0&0\\0&0&\diag(t^{-b_1},\ldots,t^{-b_s})
		\end{pmatrix}
	\end{equation*}
	for $ r+s\le n, a_1\ge \cdots\ge a_r\ge 1 $, $ b_s\ge\cdots\ge b_1\ge 1 $. 
\end{prop}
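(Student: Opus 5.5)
I will follow the proof of Proposition~\ref{GL index}, replacing the row and column operations of $\GLn(\calO)\times\GLm(\calO)$ by operations inside $\Orth_{2n}(\calO)\times\Sp_{2m}(\calO)$. Here $V=\Hom(\bbC^{2n},\bbC^{2m})$ is itself symplectic, and $v^*$ denotes the adjoint of $v$ with respect to the two forms. The condition $v^*v\in\so_{2n}(\calO)$, $vv^*\in\sp_{2m}(\calO)$ is the restriction of the set in Proposition~\ref{GL index} to the self-dual locus. So the plan is a doubled version of that argument, carried out compatibly with the forms.

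\textbf{Step 1: a symplectic/orthogonal Smith normal form.} By the Cartan decomposition, $\Orth_{2n}(F)=\Orth_{2n}(\calO)T(F)\Orth_{2n}(\calO)$ and similarly for $\Sp_{2m}$. I would first bring $v$ to a block form adapted to the polarizations $\bbC^{2n}=\bbC^n\oplus(\bbC^n)^*$ and $\bbC^{2m}=\bbC^m\oplus(\bbC^m)^*$. Concretely, I would use the Levi subgroups $\GLn(\calO)\subset \Orth_{2n}(\calO)$ and $\GLm(\calO)\subset\Sp_{2m}(\calO)$, together with the unipotent radicals of the Siegel parabolics. The aim is to reduce $v$ modulo $V(\calO)$ to a matrix whose nonzero entries occur only in the blocks $\Hom(\bbC^n,\bbC^m)$ and $\Hom((\bbC^n)^*,(\bbC^m)^*)$, and which is diagonal there. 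The argument is an elementary-divisor computation:
\begin{itemize}
\item Pick an entry of minimal valuation, $t^{-a}$ with $a\ge1$.
\item Move it to a corner using the Weyl group elements available in $\Orth_{2n}(\calO)$ and $\Sp_{2m}(\calO)$.
\item Clear its row and column using root subgroups.
\item Check that the cleared region is again a $V$ of the same type, with $(n,m)$ replaced by $(n-1,m-1)$, and induct.
\end{itemize}
Each step splits off a hyperbolic plane on both sides. This is exactly where the hypothesis $n\le m$ enters.

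\textbf{Step 2: use the moment map conditions.} The key is to show that the conditions $v^*v\in\so_{2n}(\calO)$ and $vv^*\in\sp_{2m}(\calO)$ force the pole parts of the two blocks to live on complementary index sets. Writing the entries of the two blocks with valuations $-a_i$ and $-b_j$, the $(i,j)$-entry of $v^*v$ pairs an $\bbC^n$-pole $t^{-a_i}$ with a $(\bbC^n)^*$-pole $t^{-b_j}$. These cannot both be nonzero on the same hyperbolic pair without producing a non-integral entry. This is the analogue of the estimate $x_{ij}\in t^{\max\{a_i,a_j\}}\calO$ in the proof of Proposition~\ref{GL index}, and it gives $r+s\le n$.

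\textbf{Step 3: normalize the second block and finish.} As in the $\GL$ proof, I would then use the stabilizer of the first block, namely $\GL_{n-r}(\calO)$ and $\GL_{m-r}(\calO)$ embedded in the Levi subgroups, to diagonalize the second block without disturbing the first block modulo $V(\calO)$. Finally I would reorder the exponents using the Weyl groups to get $a_1\ge\cdots\ge a_r\ge1$ and $b_s\ge\cdots\ge b_1\ge1$. Here $\Orth_{2n}$, rather than $\SO_{2n}$, is needed to allow the swap of the last pair.

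\textbf{Main obstacle.} The hard part is Step 1: making sure the elimination can be done entirely inside the isometry groups. Unlike the $\GL$ case, one cannot independently clear the $\Hom(\bbC^n,(\bbC^m)^*)$ and $\Hom((\bbC^n)^*,\bbC^m)$ blocks. My first attempt would be to treat $v$ as the pair $(v,v^*)$ with $v^*$ determined by $v$, and apply Proposition~\ref{GL index} to the underlying $\GL_{2n}\times\GL_{2m}$ problem. I would then argue that the conjugating elements can be chosen to be isometries. For this I would use that the normal form there is self-adjoint, and average, or use a Gram–Schmidt over $\calO$, to pass from a $\GL$ conjugation to an orthogonal/symplectic one. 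If that fails, I would fall back on the explicit hyperbolic-plane induction described in Step 1.
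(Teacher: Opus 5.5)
\textbf{There is a genuine gap in your Step 1, which is the step you flag as the main obstacle.}

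Write $v=(v_1,v_2,v_3,v_4)$ with $v_1\in\Hom(F^n,F^m)$, $v_2\in\Hom(F^n,(F^m)^*)$, $v_3\in\Hom((F^n)^*,F^m)$ and $v_4\in\Hom((F^n)^*,(F^m)^*)$. You claim that, without yet using the moment-map conditions, $\Orth_{2n}(\calO)\times\Sp_{2m}(\calO)$ can reduce any $v$ modulo $V(\calO)$ to a matrix supported and diagonal on the $v_1,v_4$ blocks. This is false.

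Take $n=m=1$ with $v_1=v_3=t^{-1}$ and $v_2=v_4=0$. Here $\Sp_2(\calO)=\GL_2(\calO)\cap\{\det=1\}$ acts by row operations, and $\Orth_2(\calO)$ only rescales the two columns by units or swaps them. So two quantities are invariants of the orbit of $v$ modulo $V(\calO)$: the multiset of negative column valuations, and the isomorphism class of the $\calO$-module $(v(\calO^2)+\calO^2)/\calO^2$. For this $v$ they are $\{-1,-1\}$ and $\calO/t$. A diagonal $(v_1,v_4)$-matrix with both column valuations $-1$ instead gives $(\calO/t)^2$.

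Your pivot-clearing breaks because the Siegel unipotent radical of $\Orth_{2n}$ is $\Lambda^2\calO^n$, which has zero diagonal. Nothing in the group moves $e_i^*$ toward $e_i$, so the entry of $v_3$ in the same hyperbolic pair as a pivot of $v_1$ cannot be removed by the group at all.

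The same example defeats your fallback. This $v$ is $\GL_2(\calO)\times\GL_2(\calO)$-equivalent to $\diag(t^{-1},0)$, which already has the target shape. But the column-valuation invariant shows $v$ is not isometrically equivalent to it. So no averaging or Gram--Schmidt procedure can upgrade the $\GL$ normal form.

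\textbf{The missing idea is that the moment-map conditions must be used inside the clearing step}, not only afterwards to separate supports. Note that the $v$ above violates $v_1v_3^\transpose+v_3v_1^\transpose\in\glm(\calO)$.

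The paper's route is as follows.
\begin{enumerate}
\item Use $\GLn(\calO)\times\GLm(\calO)$ and the Weyl groups to make $v_1=\diag(t^{-a_1},\ldots)$, with each pivot of minimal valuation in its row and column of $v_2,v_3$.
\item Write the entries of $v_2$ as $t^{-a_j}x_{ij}$ and those of $v_3$ as $t^{-a_i}x_{ij}$, with $x_{ij}\in\calO$. The condition $v_1^\transpose v_2-v_2^\transpose v_1\in\gln(\calO)$ gives $x_{ij}-x_{ji}\in t^{a_i+a_j}\calO$. The condition $v_1v_3^\transpose+v_3v_1^\transpose\in\glm(\calO)$ gives $x_{ij}+x_{ji}\in t^{a_i+a_j}\calO$.
\item So the normalized matrix for $v_2$ is symmetric up to small error, and the one for $v_3$ is antisymmetric up to small error. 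In particular $2x_{ii}\in t^{2a_i}\calO$, so the pivot-diagonal entries of $v_3$ are already integral.
\item The symmetric part of $v_2$ is removed by $\Sym^2\calO^m\subset\Sp_{2m}(\calO)$. The antisymmetric part of $v_3$ is removed by $\Lambda^2\calO^n\subset\SO_{2n}(\calO)$.
\item Once $v_2,v_3$ are integral, the conditions reduce to $v_1v_4^\transpose\in\glm(\calO)$ and $v_4^\transpose v_1\in\gln(\calO)$. Proposition~\ref{GL index}, applied to the pair $(v_1,v_4^\transpose)$ under the Levi, finishes the proof.
\end{enumerate}
Your Steps 2--3 essentially re-derive this last reduction, and they are fine once $v_2,v_3$ have been made integral in this way.
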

\begin{proof}
	Write 
	\begin{equation*}
		v=(v_1,v_2,v_3,v_4)\in\Hom(F^n,F^m)\oplus\Hom(F^n,(F^m)^*)\oplus\Hom((F^n)^*,F^m)\oplus\Hom((F^n)^*,(F^m)^*), 
	\end{equation*}
	and 
	\begin{equation*}
		v^*=(-v_4^\transpose,-v_2^\transpose,v_3^\transpose,v_1^\transpose)\in \Hom(F^m,F^n)\oplus\Hom(F^m,(F^n)^*)\oplus\Hom((F^m)^*,F^n)\oplus\Hom((F^m)^*,(F^n)^*). 
	\end{equation*}
	Then the condition of $ vv^*\in\sp_{2m}(\calO) $ is equivalent to $ v_1v_4^\transpose+v_3v_2^\transpose,v_1v_3^\transpose+v_3v_1^\transpose,v_2v_4^\transpose+v_4v_2^\transpose\in\glm(\calO) $. The condition of $ v^*v\in\so_{2n}(\calO) $ is equivalent to $ v_3^\transpose v_2-v_4^\transpose v_1,v_3^\transpose v_4-v_4^\transpose v_3,v_1^\transpose v_2-v_2^\transpose v_1\in\gln(\calO) $. 
	
	Use elements in $ \GLn(\calO),\GLm(\calO) $ and permutations $ (\bbZ/2\bbZ)^n\ltimes\sym_n,(\bbZ/2\bbZ)^m\ltimes\sym_m $, we can make $ v_1 $ diagonal and $ v_t((v_1)_{jj})\le v_t((v_2)_{ij}) $, $ v_t((v_1)_{ii})\le v_t((v_3)_{ij}) $. 
	
	In particular, write $ v_1=\begin{pmatrix}
		\diag(t^{-a_1},\ldots,t^{-a_n})\\0
	\end{pmatrix} $for $ a_1\ge\cdots\ge a_n $. Let $ r=\max\{i:a_r>0\} $. Write $ v_2 $ as follows
	\begin{equation*}
		\begin{pmatrix}
			t^{-a_1}x_{11}&\cdots&t^{-a_n}x_{1n}\\
			\vdots&&\vdots\\
			t^{-a_1}x_{m1}&\cdots&t^{-a_n}x_{mn}
		\end{pmatrix},
	\end{equation*}
	where $ x_{ij}\in\calO $. Then the condition $ v_1^\transpose v_2-v_2^\transpose v_1\in\gln(\calO) $ gives $ x_{ij}-x_{ji}\in t^{a_i+a_j}\calO,1\le i,j\le n $. 
	
	Take $ y_{ij}=y_{ji}=x_{ji} $ for $ i\le r, i\le j $ and $ y_{ij}=0 $ for $ i,j>r $. This gives an element $ Y $ in $ \Sym^2\calO^m \subset \Sp_{2m}(\calO) $. Take the action, we get $ x_{ij}'=x_{ij}-x_{ji} $ and $ x_{ji}'=0 $ for $ i\le r,i\le j $. Thus $ (v_2')_{ij}=t^{-a_j}(x_{ij}-x_{ji})\in t^{a_i}\calO\subset \calO $ and $ (v_2')_{ji}=0 $ for $ i\le r, i\le j $. When $ i,j>r $, we have $ (v_2')_{ij}=(v_2)_{ij}=t^{-a_j}x_{ij}\in t^{-a_j}\calO\subset\calO $. In conclusion, we have $ v_2'\in\Hom(\calO^n,(\calO^m)^*) $. 
	
	
	Similarly, write 
	\begin{equation*}
		v_3=\begin{pmatrix}
			t^{-a_1}x_{11}&\cdots&t^{-a_1}x_{1m}\\
			\vdots&&\vdots\\
			t^{-a_n}x_{n1}&\cdots&t^{-a_n}x_{nn}\\
			x_{n+1,1}&\cdots&x_{n+1,n}\\
			\vdots&&\vdots\\
			x_{m1}&\cdots&x_{mn}
		\end{pmatrix},
	\end{equation*}
	where $ x_{ij}\in\calO $ for $ 1\le i,j\le n $. The condition $ v_1v_3^\transpose+v_3v_1^\transpose\in\glm(\calO) $ gives $ x_{ij}+x_{ji}\in t^{a_i+a_j}\calO $ for $ 1\le i,j\le n $ and $ x_{ij}\in t^{a_j}\calO $ for $ i>n $. 
	
	If $ a_n\le 0 $, from our construction of $ v_1 $, we know $ x_{ij}\in\calO $ for $ i>n $. Otherwise, we have $ a_1\ge\cdots\ge a_n\ge 1 $, then $ x_{ij}\in t^{a_j}\calO\subset\calO $ for $ i>n $. Anyway, we have $ x_{ij}\in\calO $ for $ i>n $. 
	
	For the remaining, use exactly the same method as before to use an element in $ \Lambda^2\calO^n\subset \SO_{2n}(\calO) $ to make $ v_3\in\Hom((\calO^n)^*,\calO^m) $. 
	
	Now $ v_3 v_2^\transpose\in\glm(\calO),v_3^\transpose v_2\in\gln(\calO) $, we get $ v_1v_4^\transpose\in\glm(\calO), v_4^\transpose v_1\in\gln(\calO) $. Using the result in Proposition \ref{GL index}, we can make $ v_4 $ into a diagonal matrix. 
\end{proof}
\begin{cor}
	Let $ n\le m $. The irreducible elements in $ D_{\Orth_{2n}\times\Sp_{2m}}(V_F/(\VO,\psi)) $ is indexed by $ X_\bullet(\Orth_{2n}) $. 
\end{cor}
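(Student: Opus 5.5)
Following the proof of the corollary in the $(\GLn,\GLm)$ case, I will attach to each relevant orbit of the preceding proposition a cocharacter of $\Orth_{2n}$. Then I will show that each orbit supports exactly one irreducible object, and that every such object occurs.

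\emph{Step 1: parametrisation.} By the preceding proposition, every point of the set
\begin{equation*}
	\{v\in V(F):v^*v\in\so_{2n}(\calO),\ vv^*\in\sp_{2m}(\calO)\}/V(\calO)
\end{equation*}
can be brought, up to the $\Orth_{2n}(\calO)\times\Sp_{2m}(\calO)$-action, to the normal form with data $r+s\le n$, $a_1\ge\cdots\ge a_r\ge1$, $b_s\ge\cdots\ge b_1\ge1$. I send this normal form to the coweight
\begin{equation*}
	(a_1,\ldots,a_r,0,\ldots,0,-b_1,\ldots,-b_s)
\end{equation*}
of the maximal torus of $\GLn\subset\Orth_{2n}$. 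Taken modulo the Weyl group $(\bbZ/2\bbZ)^n\rtimes\Sn$ of $\Orth_{2n}$, this becomes an element of $X_\bullet(\Orth_{2n})/W$, which is the sense in which "indexed by $X_\bullet(\Orth_{2n})$" should be read. The sign flips are exactly what the permutations $(\bbZ/2\bbZ)^n$ used in the proof realise, so the assignment is well defined.

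\emph{Step 2: injectivity.} Distinct normal forms must lie in distinct orbits. I will detect the orbit by invariants: the elementary divisors of $v_1$ recover the $a_i$, and those of $v_4$ modulo $\calO$ recover the $b_j$. I still have to check that these invariants are unchanged under the full group and not only under its $\GLn\times\GLm$ part. I expect this to follow because $\Orth_{2n}(\calO)\times\Sp_{2m}(\calO)$ preserves the lattice filtration of $v$ viewed as a map $F^{2n}\to F^{2m}$. After Weyl-symmetrisation, the elementary divisors of that map are $\{a_i\}\cup\{b_j\}$ together with the appropriate signs.

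\emph{Step 3: one object per orbit.} On each relevant orbit I must produce a unique irreducible object, i.e. show that the stabiliser admits exactly one rank-one local system compatible with the $(\VO,\psi)$-twist. First I check that the character $v'\mapsto\psi(\omega(v,v'))$ is trivial on the identity component of the stabiliser. This is the same computation that makes the orbit relevant in the first place, namely $v^*v\in\so_{2n}(\calO)$ and $vv^*\in\sp_{2m}(\calO)$. Then I check that the component group of the stabiliser contributes no further characters.

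\emph{Step 4: existence.} For each $\lambda$ I take the object $\dV\cG V_\lambda$, with $\dV$ the object supported on $V_\calO$ and $V_\lambda$ the Satake object of $\Orth_{2n}$. Its support contains the orbit of $\lambda$ as the open stratum, so its top irreducible constituent realises $\lambda$. This shows the indexing map is surjective.

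The main obstacle is Step 3, in particular the component group for the orthogonal group, where the disconnectedness of $\Orth_{2n}$ could allow extra local systems or a sign twist. I would first handle it by the same Fourier-transform trick as in the first proposition of the paper. Choosing the $\GLn\times\GLm$-stable polarisation $V=\Hom(\bbC^n\oplus(\bbC^n)^*,\bbC^m)\oplus\Hom(\ldots,(\bbC^m)^*)$ reduces the question to honest equivariant sheaves on a loop space for the Levi subgroups. Stabilisers there are connected pro-unipotent extensions of products of $\GL$'s, which have no nontrivial equivariant local systems. If that reduction fails, I would fall back on Lysenko's description of $\Weil^\heartsuit_{G,H}$, checking directly that the $\Orth_{2n}/\SO_{2n}$ component acts trivially on the stalk at the normal form.
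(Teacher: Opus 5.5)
Your Step 1 is the paper's entire proof. The paper uses the sign-change Weyl elements of $\Orth_{2n}$ and $\Sp_{2m}$ to move the $t^{-b_j}$ block into $v_1$, reads off $(a_1,\ldots,a_r,0,\ldots,0)$, and stops. ``Indexed'' there means, as in the preceding discussion of relevant orbits, that these are the possible supports. Your Step 2 is a correct addition the paper lacks, provided you use the right invariant. The elementary divisors of $v$ itself are not invariants of $v+\VO$. The torsion module $(v(\calO^{2n})+\calO^{2m})/\calO^{2m}\cong\bigoplus_i\calO/t^{a_i}\oplus\bigoplus_j\calO/t^{b_j}$ is: it depends only on $v+\VO$, its isomorphism class is $\GL_{2n}(\calO)\times\GL_{2m}(\calO)$-invariant, and it separates the classes in $X_\bullet(T)/W_{\Orth_{2n}}$.

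The genuine gap is Step 3: its conclusion, one irreducible object per relevant orbit, is false because $\Orth_{2n}$ is disconnected. Look at the base orbit, the point $0\in V_F/\VO$. Its stabilizer is all of $\Orth_{2n}(\calO)\times\Sp_{2m}(\calO)$. The $(\VO,\psi)$-twist is trivial there, since $\omega$ vanishes on $\VO\times\VO$. And $\det$ maps $\pi_0$ of the stabilizer onto $\bbZ/2$. Hence $\dV$ and $\dV\otimes\mathrm{sgn}$ (the equivariant structure twisted by $\det\colon\Orth_{2n}(\calO)\to\{\pm1\}$) are non-isomorphic irreducible objects with the same support. The same happens whenever $r+s<n$. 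After the absorption of Step 1, the columns at $e_n$ and $e_n^*$ both vanish, so the reflection exchanging $e_n$ and $e_n^*$ (determinant $-1$) fixes the normal form exactly. The sign twist of an irreducible object on that orbit is then a second one. Only for $r+s=n$ is the stabilizer forced into $\SO_{2n}(\calO)\times\Sp_{2m}(\calO)$, because it must preserve the Lagrangian $\langle e_1^*,\ldots,e_n^*\rangle$ modulo $t$, hence its family.

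Your proposed repair could not prove Step 3 even if it were true.
\begin{itemize}
\item The polarization $\Hom(\bbC^{2n},\bbC^m)\oplus\Hom(\bbC^{2n},(\bbC^m)^*)$ is preserved by $\Orth_{2n}\times\GLm$, not just $\GLn\times\GLm$. In the resulting Schr\"odinger model the stabilizers contain orthogonal groups (at $0$, all of $\Orth_{2n}(\calO)\times\GLm(\calO)$), so they are not connected extensions of products of $\GL$'s.
\item More fundamentally, restricting to honest equivariance for a subgroup $K'\subset K$ cannot exclude local systems. Irreducible $K$-equivariant local systems on $K\cdot x$ are irreducible representations of $\pi_0(K_x)$, and components of $K_x$ not met by $K'$ become invisible after restriction.
\item The fallback, checking that the non-identity component of $\Orth_{2n}$ ``acts trivially on the stalk'', does not work either. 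That action is not a property of the orbit but a choice of equivariant structure, and both choices occur.
\end{itemize}
What survives is what the paper actually proves: your Steps 1 and 2 show that the possible supports of irreducible objects are indexed by $X_\bullet(T)/W_{\Orth_{2n}}$. Step 3 should be dropped rather than repaired, since on supports with $r+s<n$ the sign twist doubles the irreducible objects.
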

\begin{proof}
	As $ r+s\le n $, we can further use permutations in Weyl group to make $ v+V(\calO) $ is conjugate to $ \begin{pmatrix}
		\diag(t^{-a_1},\ldots,t^{-a_r})&0\\0&0
	\end{pmatrix}+V(\calO) $ for $ r\le n $. Thus it corresponds to $ (a_1,\ldots, a_r,0\ldots,0)\in X_\bullet(\Orth_{2n}) $. 
\end{proof}
\section{Deequivariantization}
\subsection{Hecke action on the lattice model}
For a group homomorphism $ G\rightarrow \Mp(V) $, we want to define the action of $ D(G) $ on $ D(V/(L,\psi)) $, we need a kernel sheaf on $ G\times V $. This is done in \cite{SR07'} and also \cite{LL09}. Let $ \LagV $ be the space of all Lagrangians on $ V $, \cite{LL09} constructed a sheaf $ \calF_\LagV $ on $ \LagV\times\LagV\times V $ with properties. By the map $ G\rightarrow \LagV\times\LagV $ given by $ g\mapsto (gL,L) $, we obtain a sheaf $ \FG $ on $ G\times V $. Thus we can define the action by
\begin{equation*}
	\calS*\calF=\act_!(\pr_2^*\FG\otimes \pr_{23}^*\calS\otimes\pr_{13}^*\calF\otimes\calL_\psi), 
\end{equation*}
Here $ \act\colon G\times V\times V\rightarrow V $ is given by $ (g,v_1,v_2)\mapsto gv_1+v_2 $; $ \pr $ are corresponding projections; $ \calL_\psi $ is the sheaf on $ G\times V\times V$ given by the pullback of Artin-Schreier sheaf through $G\times V\times V\rightarrow \bbA^1, (g,v_1,v_2)\mapsto \omega(gv_1,v_2) $. 

The properties of $ \calF_\LagV $ ensures this action gives a genuine module structure. 

For the unit, take $ \calS=\delta_1\in D(G) $. From the property $ \Delta^*\calF_\LagV=\calF_\Delta $, we know $ \FG|_1=\Lambda_L $ and thus the convolution product with an $ (L,\psi) $-equivariant sheaf is just identity. 

\begin{prop}
	The associativity holds. I.e., we have $ \calS_1*(\calS_2*\calF)\simeq (\calS_1*\calS_2)*\calF $. 
\end{prop}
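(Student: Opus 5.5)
We prove the associativity by base change and the projection formula, reducing it to the multiplicativity property of the Lion--Lysenko kernel $\calF_\LagV$. The convolution on $D(G)$ is $\calS_1*\calS_2=\mult_!(\calS_1\boxtimes\calS_2)$.

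\emph{Step 1: expand $\calS_1*(\calS_2*\calF)$.} Write $\calS_2*\calF$ as $\act_!$ of the integrand on $G\times V\times V$. Substitute it into the outer convolution and apply proper base change and the projection formula. This expresses $\calS_1*(\calS_2*\calF)$ as $!$-pushforward from $G\times G\times V\times V\times V$, with coordinates $(g_1,g_2,v_1,w_2,w_1)$, along
\begin{equation*}
(g_1,g_2,v_1,w_2,w_1)\mapsto g_1(g_2v_1+w_2)+w_1 .
\end{equation*}
The integrand is the tensor product of four groups of factors:
\begin{itemize}[itemsep=0pt]
\item $\calS_1(g_1)\otimes\calS_2(g_2)\otimes\calF(v_1)$;
\item the kernels $\calF_G(g_1,w_1)$ and $\calF_G(g_2,w_2)$;
\item the Artin--Schreier sheaf of $\omega(g_2v_1,w_2)+\omega(g_1(g_2v_1+w_2),w_1)$.
\end{itemize}

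\emph{Step 2: expand $(\calS_1*\calS_2)*\calF$.} Similarly, this side is a pushforward from $G\times G\times V\times V$, with coordinates $(g_1,g_2,v_1,w)$, along $(g_1g_2)v_1+w$. The integrand is $\calS_1(g_1)\otimes\calS_2(g_2)\otimes\calF(v_1)\otimes\calF_G(g_1g_2,w)\otimes\calL_\psi(\omega(g_1g_2v_1,w))$.

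\emph{Step 3: change variables.} On the first side, set $w=g_1w_2+w_1$ and keep $w_2$ as an extra variable. Using $G$-invariance of $\omega$, the phase becomes
\begin{equation*}
\omega(g_1g_2v_1,w)+\bigl[\omega(g_1w_2,w_1)-\omega(g_1g_2v_1,g_1w_2)+\omega(g_1g_2v_1,g_1w_2)\bigr],
\end{equation*}
so it equals $\omega(g_1g_2v_1,w)$ plus a term depending only on $(g_1,w_2,w_1)$. Hence the first side is the second side's formula, but with $\calF_G(g_1g_2,w)$ replaced by the fibrewise convolution
\begin{equation*}
\bigl(\calF_G(g_1,-)\star\calF_G(g_2,-)\bigr)(w)=\int_{g_1w_2+w_1=w}\calF_G(g_1,w_1)\otimes\calF_G(g_2,w_2)\otimes\calL_\psi(\omega(g_1w_2,w_1)).
\end{equation*}

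\emph{Step 4: the key input.} It remains to show that this fibrewise convolution is isomorphic to $\calF_G(g_1g_2,w)$, naturally on $G\times G\times V$. Pull the problem back along $(g_1,g_2)\mapsto(g_1g_2L,g_1L,L)\in\LagV^3$. This reduces it to the composition property of the Lion--Lysenko sheaf: the twisted convolution of $\calF_\LagV$ over $(L_1,L_2)$ with $\calF_\LagV$ over $(L_2,L_3)$ is $\calF_\LagV$ over $(L_1,L_3)$. Here we use the $G$-equivariance $g^*\calF_\LagV\cong\calF_\LagV$ to identify $\calF_G(g_2,-)$, transported by $g_1$, with the kernel attached to $(g_1g_2L,g_1L)$.

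\emph{Main obstacle.} I expect Step 4 to be the hardest. The difficulty is matching the precise normalisation of the composition isomorphism in \cite{LL09}, which involves a shift and Tate twist and a possible sign or metaplectic character. Since $G\rightarrow\Sp(V)$ is only assumed to lift to $\Mp(V)$, we must use that lift to trivialise the resulting cocycle. I would first check the case $g_1=1$, which is the unit property already verified. I would then cite the statement of \cite{LL09} that $\calF_\LagV$ defines a canonical twisted-multiplicative structure, and show that the genuineness of the lift cancels the twist. Compatibility of these isomorphisms with higher associativity constraints would follow in the same way from the higher coherence of that structure.
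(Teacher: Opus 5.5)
Your proposal is correct and follows the paper's proof. Both expand the iterated convolution and change variables $w=g_1w_2+w_1$, which splits off the phase $\omega(g_1w_2,w_1)$. Both then collapse the resulting fibrewise convolution of two copies of $\FG$ to $\mult^*\FG$, by pulling back the composition property of $\calF_\LagV$ along $(g_1,g_2)\mapsto(g_1g_2L,g_1L,L)$ and using $G$-equivariance. The paper does not track the normalisation issues you flag (shifts, twists, the sign of the phase, the metaplectic cocycle) either; it absorbs them into the cited property of $\calF_\LagV$.
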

\begin{proof}
	For clarity, we use $ (g_1,g_2v_1+v_2,v_3) $ to denote the map $ G\times G\times V\times V\times V\rightarrow G\times V\times V $ given by $ (g_1,g_2,v_1,v_2,v_3)\mapsto (g_1,g_2v_1+v_2,v_3) $ and similarly for other maps. Then we have
	\begin{align*}
		\calS_1*(\calS_2*\calF)=&(g_1(g_2v_1+v_2)+v_3)_!((g_1,g_2,v_1)^*(\calS_1\boxtimes\calS_2\boxtimes\calF)\otimes\\
		&\otimes(g_2,v_2)^*\FG\otimes (g_1,v_3)^*\FG\otimes \omega(g_2v_1,v_2)^*\calL_\psi\otimes\omega(g_1(g_2v_1+v_2),v_3)^*\calL_\psi).
	\end{align*}
	
	From the convolution property of $ \calF_\LagV $, we have the following isomorphism in $ \LagV\times\LagV\times\LagV\times V $:
	\begin{equation*}
		\add_!(\pr_{15}^*\calF_\LagV\otimes\pr_{34}^*\calF_\LagV\otimes\calL_\psi)\simeq \pr_2^*\calF_\LagV.
	\end{equation*}
	Take the pullback by the map $ G\times G\rightarrow \LagV\times\LagV\times\LagV, (g_1,g_2)\mapsto(g_1g_2L,g_1L,L) $, we see 
	\begin{equation*}
		\add_!((g_1,v_1)^*\FG\otimes(g_2,g_1^{-1}v_2)^*\FG\otimes\calL_\psi)\simeq \mult^*\FG.
	\end{equation*}
	Here, we used the fact that $ \calF_\LagV $ is $ G $-equivariant. By change of variables, we see
	\begin{equation*}
		(g_1,g_2,v_1+g_1v_2)_!((g_1,v_1)^*\FG\otimes(g_2,v_2)^*\FG\otimes\omega(v_1,g_1v_2)^*\calL_\psi)\simeq\mult^*\FG. 
	\end{equation*}
	Hence we can simplify, by letting $ u=g_1v_2+v_3 $, 
	\begin{align*}
		\calS_1*(\calS_2*\calF)=&(g_1g_2v_1+g_1v_2+v_3)_!((g_1,g_2,v_1)^*(\calS_1\boxtimes\calS_2\boxtimes\calF)\otimes\\
		&\otimes(g_2,v_2)^*\FG\otimes(g_1,v_3)^*\FG\otimes\omega(g_1g_2v_1,g_1v_2+v_3)^*\calL_\psi\otimes\omega(g_1v_2,v_3)^*\calL_\psi)\\
		=&(g_1g_2v_1+u)_!((g_1,g_2,v_1)^*(\calS_1\boxtimes\calS_2\boxtimes\calF)\otimes(g_1g_2,u)^*\FG\otimes\omega(g_1g_2v_1,u)^*\calL_\psi)\\
		=&(gv_1+u)_!((g,v_1)^*((\calS_1*\calS_2)\boxtimes\calF)\otimes(g,u)^*\FG\otimes\omega(gv_1,u)^*\calL_\psi).
	\end{align*}
	The right hand side is exactly $ (\calS_1*\calS_2)*\calF $. 
\end{proof}

The image of an $ (L,\psi) $-equivariant sheaf is still an $ (L,\psi) $-equivariant sheaf comes from the $ \act_{lr} $-equivariant property of $ \calF_\LagV $. 

If a subgroup $ H\subset G $ fixes $ (L,\psi) $, we get the map $ G/H\rightarrow \LagV\times\LagV $, using it, we can define the action of $ D(H\backslash G/H) $ on $ D_H(V/(L,\psi)) $ similarly: 
\begin{equation*}
	\calS*\calF=\act_!(\pr_2^*\FG\otimes \pr_3^*(\calS\widetilde{\boxtimes}\calF)\otimes\calL_\psi),
\end{equation*}
Here $ \act\colon H\backslash ((G\times^H V)\times V)\rightarrow H\backslash V $ is given by $ (g,v_1,v_2)\mapsto gv_1+v_2 $. Since $ H $ fixes $ L $, $ \FG $ descends to a sheaf $ \calF_{G/H} $ on $ G/H\times V $. The $ \act_G $-equivariant property of $ \calF_\LagV $ ensures $ \calF_{G/H} $ is $ H $-equivariant under the action of $ h\cdot(gH,v)=(hgH,hv) $. In conclusion, the action $ \calS*\calF $ is well-defined. The proof of properties such as associativity is identical as above. 

\begin{prop}
	Take a subspace $ W\subset L $ and subgroup $ H\subset G $ that fixes $ W $. Then $ H $ acts on the symplectic space $ W^\perp/W $. We have the compatibility of both actions: 
	\begin{equation*}
		\begin{tikzcd}
			D(H)\dar&\otimes& D((W^\perp/W)/(L/W,\psi))\dar\rar{\act}&D((W^\perp/W)/(L/W,\psi))\dar\\
			D(G)&\otimes&D(V/(L,\psi))\rar{\act}&D(V/(L,\psi))
		\end{tikzcd}
	\end{equation*}
\end{prop}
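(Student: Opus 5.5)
The plan is to make the vertical arrows explicit and then reduce the commutativity of the square to an identity between kernel sheaves, which in turn should follow from a compatibility of the universal sheaf $\calF_\LagV$ of \cite{LL09} with isotropic reduction. The left vertical arrow is $i_!$ (equivalently $i_*$) along the closed embedding $i\colon H\hookrightarrow G$. The right vertical arrow is the analogue of the transition maps in the lattice model: write $q\colon W^\perp\to W^\perp/W$ for the projection and $j\colon W^\perp\hookrightarrow V$ for the inclusion, and take $\Phi=j_*q^\dagger=j_*q^*[\dim W]$. First I check that $\Phi$ carries $(L/W,\psi)$-equivariant sheaves to $(L,\psi)$-equivariant ones. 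Since $W\subset L\subset W^\perp$, the character $\omega(l,\cdot)$ restricted to $W^\perp$ factors through $W^\perp/W$ when $l\in L$ (because $\omega(L,W)=0$), and $L$ preserves $W^\perp$. So the $(L,\psi)$-structure on $q^*\calF$ is induced from the $(L/W,\psi)$-structure on $\calF$, and $j_*$ preserves it. The $W$-direction of the equivariance is trivial because $q^*\calF$ is constant along $W$.

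Next I compute both composites by base change. Going around the bottom gives
\[
i_!\calS*\Phi\calF=\act_!\bigl(\pr_2^*\FG|_{H\times V}\otimes\pr_{23}^*\calS\otimes\pr_{13}^*(j_*q^\dagger\calF)\otimes\calL_\psi\bigr)
\]
on $H\times W^\perp\times V$, where $v_1$ ranges over $W^\perp$. Going around the top gives
\[
j_*q^\dagger\act_!\bigl(\pr_2^*\calF_H\otimes\pr_{23}^*\calS\otimes\pr_{13}^*\calF\otimes\calL_\psi\bigr),
\]
where $\calF_H$ is the kernel on $H\times W^\perp/W$ attached to $h\mapsto(h(L/W),L/W)$ in $\widetilde{\calL}(W^\perp/W)$. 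Substituting $v_2=w+u$ with $u$ in a complement to $W^\perp$, the terms with $u\notin W^\perp$ should be killed by integrating the character $\omega(hv_1,v_2)$ over the $W$-direction of $v_1$. This is exactly the mechanism that makes $\Phi$ a partial Fourier-type averaging. After this step the two sides agree provided the following kernel identity holds:
\[
\FG|_{H\times V}\ \simeq\ j_*q^\dagger\calF_H \quad\text{(up to the $(L,\psi)$-twist)}.
\]

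The main obstacle is this last kernel identity. Here $\FG|_H$ is pulled back from $\calF_\LagV$ along $h\mapsto(hL,L)$, and since $hL\supset W$ both Lagrangians lie in the locus of Lagrangians containing $W$. That locus is identified with $\widetilde{\calL}(W^\perp/W)$ via $M\mapsto M/W$. What I would try first is to establish, or extract from \cite{LL09}, the statement that on this locus $\calF_\LagV$ is the extension $j_*q^\dagger$ of $\calF_{\widetilde{\calL}(W^\perp/W)}$, with the cohomological shifts and the metaplectic square-root-of-determinant twists matching. If this is not stated directly, I would prove it by uniqueness. Both sheaves are irreducible, equivariant for the metaplectic action in the appropriate sense, and restrict on the diagonal to $\Lambda_M$, which corresponds to the unit property $\Delta^*\calF_\LagV=\calF_\Delta$ used above. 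By the characterization of $\calF_\LagV$ via its equivariance and its diagonal restriction, they must then coincide. The remaining bookkeeping is to track the shifts $[\dim W]$ and the Tate twists.
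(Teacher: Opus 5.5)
Your proposal is correct and follows essentially the paper's route. The paper gives no computation beyond asserting that the compatibility of $\calF_\LagV$ with isotropic reduction to $W^\perp/W$ (from \cite{LL09}) makes the square commute; your choice $\Phi=j_*q^\dagger$ (matching the lattice-model transition maps $i_*p^\dagger$) and your base-change reduction to the kernel identity $\FG|_{H\times V}\simeq j_*q^\dagger\calF_H$ are exactly that step spelled out. For that identity, rely on \cite{LL09} as the paper does rather than on your uniqueness fallback, which as sketched is not justified: the $*$-restriction of $\calF_\LagV$ to the closed locus of pairs of Lagrangians containing $W$ is not a priori irreducible, and that locus has several orbits, so equivariance together with the diagonal value does not by itself determine the sheaf.
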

The compatibility of $ \calF_\LagV $ under taking a subquotient $ W^\perp/W $ of a Lagrangian $ W\subset V $ ensures the actions 
\begin{equation*}
	\Sat_{G}^{(n)}\otimes D_{\GO}((t^{-r}\VO/t^r\VO)/(\VO/t^r\VO,\psi))\rightarrow D_{\GO}((t^{-r-n}\VO/t^{r+n}\VO)/(\VO/t^{r+n}\VO,\psi))
\end{equation*}
are compatible. In conclusion, we have the action of $ \Sat_G $ on $ D_{\GO}(V_F/(\VO,\psi)) $. 

For our cases, $ G\times H\rightarrow\Sp(V)$ has a lift to $ \Mp(V) $, we obtain a $ D_{\GO\times \HO}(\Gr_{G\times H}) $-action on $ D_{\GO\times\HO}(V_F/(\VO,\psi)) $. 

\subsection{Through deequivariantized Hecke category}
Let $\OS=\Hom(\dV,\dV\cG\calO(\Gl))$. From \cite{BF08}, we have the isomorphism $\calO(\gl^*)=\Sym(\gl[-2])=\Hom(\dG,\dG\cG\calO(\Gl))$ and similarly $\calO(\hl^*)=\Sym(\hl[-2])=\Hom(\dH,\dH\cH\calO(\Hl))$. We define the following maps: 
\begin{equation*}
	\Sym(\gl[-2])\rightarrow \OS \quad\mbox{and}\quad \Sym(\sh{\hl})\rightarrow \OS. 
\end{equation*}
%

From the shearing on $\hl$, we have the map $\gl[-2] \rightarrow \sh{\hl}$. We will show the following diagram commute: 
\begin{equation*}
	\begin{tikzcd}
		\Sym(\sh{\hl})\rar&\Hom(\dV,\dV\cG\Res(\calO(\Hl)))\dar\\
		\Sym(\gl[-2])\uar\rar&\OS=\Hom(\dV,\dV\cG\calO(\Gl))
	\end{tikzcd}
\end{equation*}

The first map $\Hom(\dG,\dG\cG\calO(\Gl))\rightarrow \Hom(\dV,\dV\cG\calO(\Gl))$ is just defined via the action of Satake category on the category of Weil representation. 

\begin{lem}
	$ \Sym(\sh{\hl})\simeq\bigoplus\limits_{W\in\Irr\Hl}\Hom(\dG,\IC_{W^*})\otimes\gr{W}. $
\end{lem}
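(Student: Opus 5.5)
The plan is to deduce the lemma from the description of $\calO(\hl^*)$ in \cite{BF08}, followed by a Peter--Weyl decomposition and then the shearing along the $\Gm$ given by $t^2\exp(th)$. I read $\Hom(\dG,\IC_{W^*})$ as the graded $\Hom$ in the Satake category of $H$, i.e. $\Hom(\dH,\IC_{W^*})$ with $\IC_{W^*}\in\Sat_H$ the object corresponding to $W^*$. This matches the sum over $W\in\Irr\Hl$, and I will argue with this reading.

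\textbf{Step 1 (unsheared decomposition).} By \cite{BF08}, recalled just above the lemma, we have
\begin{equation*}
\Sym(\hl[-2])=\calO(\hl^*)\simeq\Hom(\dH,\dH\cH\calO(\Hl)).
\end{equation*}
This is an isomorphism of graded algebras with an $\Hl$-action, where $\Hl$ acts through the right regular action on $\calO(\Hl)$. By Peter--Weyl, $\calO(\Hl)\simeq\bigoplus_{W\in\Irr\Hl}W^*\otimes W$, with the left factor in the Satake direction and the right factor carrying the residual $\Hl$-action. Geometric Satake turns $\dH\cH(W^*\otimes W)$ into $\IC_{W^*}\otimes W$. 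Since $\dH$ is compact in $\Sat_H$, $\Hom(\dH,-)$ commutes with the infinite direct sum. This gives an $\Hl$-equivariant isomorphism of graded vector spaces
\begin{equation*}
\Sym(\hl[-2])\simeq\bigoplus_{W}\Hom(\dH,\IC_{W^*})\otimes W.
\end{equation*}
Here the cohomological grading sits entirely on the first factor, and $\Hl$ acts only on the second.

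\textbf{Step 2 (restrict and shear).} Restrict the $\Hl$-action along $\Gl\times\Gm\to\Hl$. Here $\Gl$ is the centralizer of the $\sl_2$-triple and $\Gm$ acts by $\exp(th)$. Shearing is a functor on $\Gl\times\Gm$-equivariant graded modules: it shifts the cohomological degree of the weight-$k$ part by $k$ (the $t^2$ accounts for the doubling convention). Applied to $\Sym(\hl[-2])$, it produces $\Sym(\sh{\hl})$ by definition. Applied to the right-hand side, it acts only on the factor $W$, because $\Hom(\dH,\IC_{W^*})$ has trivial $\Hl$-action and hence weight $0$. So the sheared factor is $W$ regraded by its $h$-weights, which is $\gr{W}$.

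Finally, shearing commutes with direct sums and tensor products, since it is symmetric monoidal on graded representations. The two sides therefore match, which gives the isomorphism of the lemma, $\Gl$-equivariantly.

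\textbf{Main obstacle.} The delicate point is pinning down the grading conventions, not the structure of the argument. First, the factor $2$ in $t^2\exp(th)$ has to be reconciled with the $[-2]$ in $\hl[-2]$, so that $\Sym(\hl[-2])$ sheared is exactly $\Sym(\sh{\hl})$ and $\gr{W}$ carries the intended degrees. Second, I must check which of $W$ or $W^*$ ends up on the Satake side under the BF08 identification; this depends on left versus right regular action conventions. I would first verify both on $W=\hl$ (the degree-one generators), where $\IC_{\hl}$ and the Kostant grading make everything explicit, and then argue that multiplicativity of all identifications fixes the conventions in general.
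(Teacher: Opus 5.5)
Your proposal is correct and follows the paper's proof. The paper cites \cite{BF08} directly for the $\Hl$-equivariant decomposition $\Sym^i\hl\simeq\bigoplus_{W}\Ext^{2i}(\dH,\IC_{W^*})\otimes W$, which you instead re-derive from $\calO(\hl^*)\simeq\Hom(\dH,\dH\cH\calO(\Hl))$ via Peter--Weyl. It then regrades both sides by the Cartan ($h$-weight) grading to obtain $\gr{W}$ and shifts by $[-2]$, reading $\dG$ as $\dH$ exactly as you do.
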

\begin{proof}
	From \cite{BF08}, we have $ \Sym^i\hl\simeq\bigoplus\limits_{W\in\Irr\Hl}\Ext^{2i}(\dH,\IC_{W^*})\otimes W $ as $ \Hl $ representations. Thus we can apply the grading of elements in the Cartan subgroup: 
	\begin{equation*}
		\Sym^i\gr{\hl}\simeq\bigoplus\limits_{W\in\Irr\Hl}\Ext^{2i}(\dH,\IC_{W^*})\otimes \gr{W}, 
	\end{equation*}
	hence 
	\begin{equation*}
		\Sym(\sh{\hl})=\Sym(\gr{\hl}[-2])\simeq\bigoplus\limits_{W\in\Irr\Hl}\Hom(\dH,\IC_{W^*})\otimes \gr{W}. \qedhere
	\end{equation*}
\end{proof}

The generators of this algebra is $ \sh{\hl}=\bigoplus\limits_{\Irr\Hl\ni W\subset\hl}\Ext^2(\dH,\IC_{W^*})\otimes\gr{W}[-2] $. 

We have maps from the Hecke action:
\begin{equation*}
	\Ext^{2i}(\dH,\IC_{W'^*})\otimes\gr{W'}\rightarrow \Ext^{2i}(\dV,\dV\cH W'^*)\otimes\gr{W'}\simeq\Ext^{2i}(\dV,\dV\cG\gRes(W'^*))\otimes\gr{W'}. 
\end{equation*}
Let $\gRes(W'^*)=\bigoplus\limits_{W\in\Irr\Gl}W\otimes M_W$, where $M_W$ is a graded vector space associated to the multiplicity of $W$. Then $\gr{W'}=\bigoplus\limits_{W\in\Irr\Gl}W^*\otimes M_W^*$. 

Hence we get the direct summand 
\begin{equation*}
	\bigoplus\limits_{W\in\Irr\Gl}\Ext^{2i}(\dV,\dV\cG W\otimes M_W)\otimes W^*\otimes M_W^*\subset \Ext^{2i}(\dV,\dV\cG\gRes(W'^*))\otimes\gr{W'}. 
\end{equation*}

Write $M_W=\oplus_{k\in\bbZ}M_{W,k}[k]$ and $M_W^*=\oplus_{k\in\bbZ}M_{W,k}^*[-k]$. Thus the first term has the direct summand 
\begin{equation*}
	\bigoplus_{k\in\bbZ}\Ext^{2i}(\dV,\dV\cG W\otimes M_{W,k}[k])\otimes W^*\otimes M_{W,k}[-k]=\bigoplus_{k\in\bbZ}\Ext^{2i+k}(\dV,\dV\cG W\otimes M_{W,k})\otimes W^*\otimes M_{W,k}[-k]. 
\end{equation*}
Taking traces of each $M_W$, we obtain a map to $ \Ext^{2i+k}(\dV,\dV\cG W)\otimes W^*[-k] $. 

In conclusion, we get the map 
\begin{equation*}
	\Ext^{2i}(\dH,\IC_{W'^*})\otimes\gr{W'}\rightarrow \bigoplus_{k\in\bbZ}\bigoplus_{W\in\Irr\Gl}\Ext^{2i+k}(\dV,\dV\cG W)\otimes W^*[-k]=\Ext^{2i}(\dV,\dV\cG\calO(\Gl)), 
\end{equation*}
and hence 
\begin{equation*}
	\Sym(\sh{\hl})=\bigoplus_{W'\in\Irr\Hl}\bigoplus_{i\in\bbZ}\Ext^{2i}(\dH,\IC_{W'^*})\otimes\gr{W'}[-2i]\rightarrow\bigoplus_{i\in\bbZ}\Ext^{2i}(\dV,\dV\cG\calO(\Gl))[-2i]=\OS. 
\end{equation*}

\subsection{Algebraic map} 
Under the assumption that $ \gRes $ is monoidal, we have this map is an algebraic homomorphism. 

On the other hand, we have a map $ \hl^\shear\hookrightarrow \Sym(\hl^\shear) \rightarrow \OS $. By showing $ \OS $ is commutative, we obtain another map $ \Sym(\hl^\shear) \rightarrow \OS $. But without the assumption, it is not known if this map coincide with the map defined before. 
\section{Examples}
\subsection{The action by standard representations}\label{std}
Here I give an explicit calculation of $\dV\cn\std_n$ in the $\GLn\times\GLm$ case. 

Note that $\Gr_{\GLn,e_1}
=\Pn$, we have $\IC_\std=\bbC_\Pn$. Thus $\dV\cn\std_n$ is the pushforward of the constant sheaf on $\VO\wtimes\Gr_{\GLn,e_1}$ to $\VF$. 

Since $\Gr_{\GLn,e_1}$ parameterize lattices $\Lambda$ such that $\calO^n\subset\Lambda\subset (t^{-1}\calO)^n$ and $\dim\Lambda/\calO^n=1$, by definition $\VO\wtimes\Gr_{\GLn,e_1}$ parameterize such a lattice $\Lambda$ and $m$ vectors in this lattice, and the map $\VO\wtimes\Gr_{\GLn,e_1}\rightarrow \VF$ forgets this lattice. 

Hence the image lies in $t^{-1}\VO$. For any element in $\VO$, its preimage is the whole $\Pn$. For any element in the image and not in $\VO$, the preimage is just one point. Thus $\dV\cn\std_n$ can be viewed as a sheaf on $t^{-1}\VO/\VO=V$. In this viewpoint, the support is the elements in $V$ whose rank is less or equal to $1$. the stalk at rank $1$ is $\bbC$, and the stalk at $0$ is $H^*(\Pn)[m+n-1]$. 

Besides, we can calculate the intersection complex directly. A rank $1$ matrix can be written as the product of a non-zero row vector and a non-zero column vector. Thus the open part is $(\bbC^n\backslash\{0\}\times\bbC^m\backslash\{0\})/\Gm$, or the $\bbC^*$ bundle $\calO(-1,-1)$ on $\Pn\times\Pm$. Then the whole space is the affine cone of this line bundle. Now the stalk of the intersection complex at $0$ is 
\begin{equation*}
	(\IC_{e_1})_0=\tau_{\le-1}(H^*(\Pn\times\Pm)/c_1(\calO(-1,-1))[m+n-1]).
\end{equation*}
Here, $\tau$ is the truncation functor related to the classical t-structure, and quotient means the taking the cone of the map 
\begin{equation*}
	c_1(\calO(-1,-1)): H^*(\Pn\times\Pm)\rightarrow H^*(\Pn\times\Pm)[-2]. 
\end{equation*}
When $n<m$, this turns out to be isomorphic to $H^*(\Pn)[m+n-1]$ and also $(\tau_{\le 2n-2}H^*(\Pm))[m+n-1]$. Thus we see $\dV\cn\std_n\simeq \IC_{e_1}$. 

Similarly, $\dV\cm\std_m$ supports on the same set and the stalk at $0$ is $H^*(\Pm)[m+n-1]$. The decomposition theorem says that this complex is a direct sum of simple objects. Besides $\IC_{e_1}$, the remaining support at $0$ and the stalk is $(\tau_{\ge 2n}H^*(\Pm))[m+n-1]$. Hence is $\bbC_0[m-n-1]\oplus\bbC_0[m-n-3]\cdots\oplus\bbC_0[-m+n+1]$. 

This calculation verifies the result in \cite{Ly11} that $\dV\cm\std_m\simeq\dV\cn\gRes(\std_m)$. 

From this viewpoint, it is clear that the action of $H^2(\Pn)=\Ext^2(\IC_{\std_n})$ on $(\dV*\IC_{\std_n})_0$ coincide with the action of $H^2(\Pm)=\Ext^2(\IC_{\std_m})$ on $(\dV*\IC_{\std_n})_0$. 

In fact, we can calculate $\End(\dV\cn\std_n)$ directly. $\dV\cn\std_n=\tau_{\le-1}j_*\bbC[m+n-1]$ fits into an exact triangle: 
\begin{equation*}
	\tau_{\le-1}j_*\bbC[m+n-1]\rightarrow j_*\bbC[m+n-1]\rightarrow \tau_{\ge 0}j_*\bbC[m+n-1].
\end{equation*}
Hence an exact triangle 
	\begin{multline*}
		\Hom(\tau_{\le-1}j_*\bbC[m+n-1],\tau_{\le-1}j_*\bbC[m+n-1])\rightarrow\\
		\rightarrow \Hom(\tau_{\le-1}j_*\bbC[m+n-1],j_*\bbC[m+n-1])\rightarrow \Hom(\tau_{\le-1}j_*\bbC[m+n-1],\tau_{\ge 0}j_*\bbC[m+n-1]).
	\end{multline*}
We can calculate
\begin{align*}
	&\Hom(\tau_{\le-1}j_*\bbC[m+n-1],\tau_{\le-1}j_*\bbC[m+n-1])\\
	=&\Hom(\tau_{\le-1}j_*\bbC[m+n-1],j_*\bbC[m+n-1])\\
	=&\Hom(j^*\tau_{\le-1}j_*\bbC[m+n-1],\bbC[m+n-1])\\
	=&\Hom(\bbC[m+n-1],\bbC[m+n-1])\\
	=&H^*(\Pn\times\Pm)/c_1(\calO(-1,-1)),
\end{align*}
and 
\begin{align*}
	&\Hom(\tau_{\le-1}j_*\bbC[m+n-1],\tau_{\ge 0}j_*\bbC[m+n-1])\\
	=&\Hom(\tau_{\le-1}j_*\bbC[m+n-1],i_*H^*(\Pn)[n-m])\\
	=&\Hom(H^*(\Pn)[m+n-1],H^*(\Pn)[n-m]). 
\end{align*}

When $n=1$, $\dV\cn\std_n=\bbC[m]$, it is clear the endomorphism is $\bbC$ of degree $0$. When $m>n\ge 2$, the minimum degree of the complex $\Hom(H^*(\Pn)[m+n-1],H^*(\Pn)[n-m])$ is $2m-1-(2n-2)\ge3$. Thus we have 
\begin{equation*}
	\Ext^2(\tau_{\le-1}j_*\bbC[m+n-1],\tau_{\le-1}j_*\bbC[m+n-1])\simeq\Ext^2(\tau_{\le-1}j_*\bbC[m+n-1],j_*\bbC[m+n-1]). 
\end{equation*}

By tracking the action, the map from $H^*(\Pn)=\End(\IC_{\std_n})$ and $H^*(\Pm)=\End(\IC_{\std_m})$ to $H^*(\Pn\times\Pm)/c_1(\calO(-1,-1))$ is the canonical map. Thus the images of $H^2(\Pn)$ and $H^2(\Pm)$ are the same. Furthermore, the image to $\Ext^2(\dV\cn\std_n,\dV\cn\std_n)$ is the same. 
\subsection{The case when $ n=1 $}
In this case, the vector space $ V $ is the standard representation of $ \GLm $. 

The closed orbits of $ \GLm_\calO $ on $ \VF $ are given by integers $ k\in \bbZ $. In particular, if $ v=t^{k}v_{k}+t^{k+1}v_{k+1}+\cdots $ is in an orbit, then $ t^{k}\VO $ is in this closed orbit. 

It is clear that $ t^{k}\VO $ is smooth, thus we have $ \IC_k=\bbC_{t^{k}\VO}[-km] $. 

Let $ A=H^*_{\Gm\times\GLm}(\pt)=\bbC[a,e_1,\ldots,e_n] $. When $ k\le l $, we have $ \Hom(\IC_k,\IC_l)=\Hom(\IC_0,\IC_{l-k})=A[-(l-k)m] $. When $ k\ge l $, we have $ \Hom(\IC_k,\IC_l)=\Hom(\IC_0,\IC_{l-k})=i_0^!\IC_{\bbA^{(k-l)m}}=A[-(k-l)m] $. 

Thus $ \OS=\Hom(\dV,\dV\conv_{\Gm}\calO(\Gm))=\Hom(\IC_0,\oplus_k\IC_k)=\oplus_k A[-|k|m] $. 

Next, we determine the algebraic structure of this algebra. As $ \OS $ is an $ A $-algebra, we have $ \OS=A\oplus\bigoplus_{k\ge 1}Ax_k\oplus\bigoplus_{k\ge 1}Ay_k $, where $ \deg x_k=\deg y_k=km $. 

From the algebraic structure of $ \OS $, we know $ x_k\cdot x_l\in Ax_{k+l} $. By degree reason, we see $ x_k\cdot x_l=c_{k,l} x_{k+l} $ for some number $ c_{k,l} $. It is clear the product is not zero, we see $ x_k $ are generated by $ x=x_1 $. Similarly we have $ y_k $ are generated by $ y=y_1 $. 

Finally, to determine the product of $ x $ and $ y $ we need the following result: 
\begin{lem}
	Let $\tau\colon E\rightarrow Z $ be a complex vector bundle of dimension $ n $, $ i\colon Z\rightarrow E $ the zero-section embedding. 
	\begin{enumerate}
		\item We have the isomorphism $ H^*(Z,\bbC_Z)\simeq \Hom(\bbC_Z,\bbC_E[2n]) $. The image of $ 1\in H^0(Z,\bbC_Z) $ is the Thom class. 
		\item The adjoint map $ \bbC_E \rightarrow i_*i^*\bbC_E=i_*\bbC_Z $ induces the map $ \Hom(\bbC_Z,\bbC_E)\rightarrow H^*(E,\bbC_E)\simeq H^*(Z,\bbC_Z) $. The image of Thom class in $ H^{2n}(Z,\bbC_Z) $ is the Euler class. 
	\end{enumerate}
\end{lem}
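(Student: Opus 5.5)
The plan is to use the standard identifications for the zero section $i\colon Z\to E$ of a rank $n$ complex vector bundle. Both parts reduce to the purity isomorphism $i^!\bbC_E\simeq \bbC_Z[-2n]$ together with the Thom isomorphism.

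\textbf{Part 1.} By adjunction,
\begin{equation*}
	\Hom(\bbC_Z,\bbC_E[2n])=\Hom(i_*\bbC_Z,\bbC_E[2n])=\Hom(\bbC_Z,i^!\bbC_E[2n]).
\end{equation*}
Here $\bbC_Z$ on the left means $i_*\bbC_Z$, and we use $i_*=i_!$ since $i$ is a closed embedding. Purity gives a canonical isomorphism $i^!\bbC_E\simeq \bbC_Z[-2n]$, using the complex orientation of $E$, so this group is $H^*(Z,\bbC_Z)$. To identify the image of $1$, note that $\Hom(i_*\bbC_Z,\bbC_E[2n])$ is the cohomology with supports $H^{2n}_Z(E)=H^{2n}(E,E\setminus Z)$. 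Under this identification the purity isomorphism is precisely the Thom isomorphism $H^{*}(Z)\xrightarrow{\sim}H^{*+2n}(E,E\setminus Z)$, which sends $1$ to the Thom class. This is the defining normalization of the purity/orientation isomorphism, so it should be checked locally: over a trivializing open set $U$, take $E|_U=U\times\bbC^n$. There the statement is that the fundamental class of $\bbC^n$ relative to $\bbC^n\setminus 0$ corresponds to $1$, which is the usual definition of the orientation class. The local classes glue because both sides are sheaf-theoretic and compatible with restriction.

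\textbf{Part 2.} Compose with the adjunction unit $\bbC_E\to i_*i^*\bbC_E=i_*\bbC_Z$. This is the map from cohomology with supports $H^{2n}(E,E\setminus Z)$ to $H^{2n}(E)$, followed by $i^*\colon H^{2n}(E)\xrightarrow{\sim}H^{2n}(Z)$, which is an isomorphism by homotopy invariance. By definition, the Euler class is $e(E)=i^*(\text{restriction of the Thom class }u)$, so the image of the Thom class is the Euler class.

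The remaining care is bookkeeping: which map the paper means, and the order of composition. I read the map as $\Hom(i_*\bbC_Z,\bbC_E[2n])\to\Hom(\bbC_E,\bbC_E[2n])=H^{2n}(E)\simeq H^{2n}(Z)$, obtained by precomposing with the unit $\bbC_E\to i_*\bbC_Z$. This is exactly the forget-supports map $H^{*}_Z(E)\to H^*(E)$.

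The main obstacle is the sign and normalization convention in the purity isomorphism in Part 1. I would fix it by the local computation on $\bbC^n$ and note that any other convention changes the answer only by a global sign, which does not affect the stated identifications.
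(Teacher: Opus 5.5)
Your proof is correct and takes the same approach as the paper. The paper just cites Exercise III.7 of Kashiwara--Schapira, noting that the complex structure supplies the orientation. That exercise is exactly your argument: the purity isomorphism $i^!\bbC_E\simeq \mathrm{or}_{E/Z}[-2n]$ with $\mathrm{or}_{E/Z}$ trivialized, the Thom class as the image of $1$ in $H^{2n}_Z(E)$, and the Euler class as its image under forgetting supports followed by restriction to $Z$. You also check that precomposing with the unit $\bbC_E\to i_*\bbC_Z$ gives the forget-supports map $H^*_Z(E)\to H^*(E)$. The citation leaves this bookkeeping implicit, and you handle it correctly.
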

\begin{proof}
	See Exercise III.7 in \cite{KS90}. Note that complex vector bundles ensure the required orientation. 
\end{proof}
Apply this lemma to the vector bundle $ \std/(\Gm\times\GLm)\rightarrow \pt/(\Gm\times\GLm) $, we see the product $ x\cdot y\in A=H^{2m}_{\Gm\times\GLm}(\pt) $ is the Euler class of $ \std $, which is $ a^n+e_1a^{n-1}+\cdots+e_n $. 

In conclusion, we see $ \OS=\bbC[a,e_1,\ldots,e_{n-1},x,y] $. 
\section{Localization}
In this section, we prove the main theorem about equivalent categories. 
\subsection{Pass through the Slodowy slice}
Consider the case $ (G,H)=(\GLn,\GLm) $. Then the map $ \hl^\shear \rightarrow \OS $ can be explicitly written down. 

After suitable shift of degrees, it is the map 
\begin{align*}
	&\glm\otimes\Ext^2(\dH,\dH\cH\glm)\rightarrow\glm\otimes\Ext^2(\dV,\dV\cm\glm)\\	=&\glm\otimes\Ext^2(\dV\cm\std_m,\dV\cm\std_m)\simeq\glm\otimes\Ext^2(\dV\cn\gRes\std_m,\dV\cn\gRes\std_m)\\
	\simeq&(\gln\oplus \std_n^{\oplus(m-n)}\oplus\std_n^{\vee\oplus (m-n)}\oplus\bbC^{(m-n)^2})\otimes\Ext^2(\dV\cn\std_n\oplus\dV^{\oplus(m-n)\shear},\dV\cn\std_n\oplus\dV^{\oplus(m-n)\shear})\\
	\rightarrow &\gln\otimes\Ext^2(\dV\cn\std_n,\dV\cn\std_n)\oplus\std_n^{\oplus(m-n)}\otimes\Ext^2(\dV\cn\std_n,\dV^{\oplus(m-n)\shear})\oplus\\
	&\oplus\std_n^{\vee\oplus (m-n)}\otimes\Ext^2(\dV^{\oplus(m-n)\shear},\dV\cn\std_n)\oplus\bbC^{(m-n)^2}\otimes\Ext^2(\dV^{\oplus(m-n)\shear},\dV^{\oplus(m-n)\shear})\\
	\rightarrow &\gln\otimes\Ext^2(\dV\cn\std_n,\dV\cn\std_n)\oplus \std_n\otimes\bigoplus_i\Ext^{2+i}(\dV\cn\std_n,\dV)\oplus\\
	&\oplus\std_n^{\vee}\otimes\bigoplus_i\Ext^{2+i}(\dV,\dV\cn\std_n)\oplus\bigoplus_i\Ext^{2+i}(\dV,\dV). 
\end{align*}
The index $ i $ in first and second direct sum runs through $ m-n-1, m-n-3, \ldots, 1-m+n $. The index $ i $ in the last direct sum runs through $ 2(m-n-1), 2(m-n-1)-2, \ldots, 2(1-m+n) $. 

In order to show the image of $ \glm $ pass through only one $ \std_n $ and one $ \std_n^\vee $, we need the following result:
\begin{prop}
	We have the following vanishing result:
	\begin{align*}
		&\Ext^{<m-n+1}(\dV\cn\std_n,\dV)=\Ext^{<m-n+1}(\dV,\dV\cn\std_n)=0,\\
		&\dim\Ext^{m-n+1}(\dV\cn\std_n,\dV)=\dim\Ext^{m-n+1}(\dV,\dV\cn\std_n)=1.
	\end{align*}
\end{prop}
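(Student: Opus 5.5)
The computation reduces to the explicit model of $\dV\cn\std_n$ from Section~\ref{std}. There we saw that $\dV\cn\std_n\simeq\IC_{e_1}=\tau_{\le-1}j_*\bbC[m+n-1]$ is supported on $t^{-1}\VO$. Viewed on $t^{-1}\VO/\VO=V=\Hom(\bbC^n,\bbC^m)$, its support is the rank $\le1$ cone $C$, and its stalk at $0$ is $H^*(\Pn)[m+n-1]$. The unit $\dV$ corresponds to $\bbC_{\VO}[0]$, i.e. the skyscraper $\bbC_0$ at the origin of $V$ in the same normalization. Since both objects live in the finite stage $D_{G_2\times H_2}((t^{-1}\VO/t\VO)/(\VO/t\VO,\psi))$ and the transition functors $i_*p^\dagger$ are fully faithful, I would compute the Ext groups there. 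This reduces to equivariant sheaves on $V$ with $\GLn\times\GLm$ acting, with the $(\VO/t\VO,\psi)$-equivariance trivialized on this piece. Equivariant cohomology only adds higher-degree classes from $H^*_{\GLn\times\GLm}(\pt)$, which is concentrated in degrees $\ge0$ with $H^0=\bbC$. So the lowest degree and its dimension can be read off from the non-equivariant computation, tensored with the degree-$0$ part.

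For $\Ext^*(\dV\cn\std_n,\dV)$, I would use adjunction with $i_0\colon\{0\}\to V$. This gives $\Hom(\IC_{e_1},i_{0*}\bbC)=\Hom(i_0^*\IC_{e_1},\bbC)$, which is the dual of the stalk $H^*(\Pn)[m+n-1]$. Its lowest nonzero degree is then governed by the top class of the stalk. The shift normalization must be pinned down so that $\dV$ is perverse at the origin (a point has $\dim 0$, but the lattice model shifts by $\dim V/2$ relative to $V$). With the normalizations of Section~\ref{std}, the stalk $H^*(\Pn)[m+n-1]$ occupies degrees $-(m+n-1),\dots,n-m-1$. The top degree $n-m-1$ is one-dimensional, coming from $H^{2n-2}(\Pn)$. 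Dualizing, $\Ext^k$ is nonzero only for $k\ge m-n+1$, and $\Ext^{m-n+1}$ is one-dimensional. This is exactly the claim, so the key bookkeeping is confirming that $\dV=\bbC_0$ with no extra shift in this normalization. I would check this against the triangle used at the end of Section~\ref{std}, where $\tau_{\ge0}j_*\bbC[m+n-1]=i_*H^*(\Pn)[n-m]$ appears. That triangle shows the costalk/stalk bookkeeping and confirms the degree window.

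For $\Ext^*(\dV,\dV\cn\std_n)=H^*(i_0^!\IC_{e_1})$, I would use Verdier self-duality of $\IC_{e_1}$ (a perverse IC sheaf of a constant local system) and of $\bbC_0$. This gives $\Ext^k(\bbC_0,\IC)\simeq\Ext^k(\IC,\bbC_0)^\vee$ up to the same degree, reducing the second statement to the first. Alternatively, one can compute the costalk directly via the conical contraction: $i_0^!\IC=\Gamma_c$-dual of the stalk, which again yields degrees $\ge m-n+1$ with a one-dimensional bottom piece.

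The main obstacle I expect is justifying the reduction to a single finite stage, compatibly with the $\psi$-twisted equivariance. One must ensure that the Ext computed in the colimit category agrees with the one on $t^{-1}\VO/t\VO$, and that the $(\VO/t\VO,\psi)$-twist does not change stalks. I would handle this by using the Schr\"odinger model (Proposition on Fourier transform/splitting), which for $\GLn\times\GLm$ is $G$-equivariantly available. In that model, full faithfulness of $i_*p^\dagger$ is standard: $p^\dagger$ is smooth pullback with contractible fibers, and $i_*$ is a closed embedding.
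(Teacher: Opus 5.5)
Your proposal is correct and is essentially the paper's proof: reduce to $t^{-1}\VO/\VO$, use adjunction with $i_0$ to identify $\Ext^*(\dV\cn\std_n,\dV)$ with the dual of the stalk $H^*(\Pn)[m+n-1]$, and use self-duality of $\IC_{e_1}$ to obtain the costalk for the other direction. There are two small differences, neither affecting the dimensions: the paper handles equivariance by freeness of $H^*_{\GLn}(\Pn)$ over $H^*_{\GLn}(\pt)$, giving a free module with generators in degrees $m-n+1,m-n+3,\ldots,m+n-1$, where you use a lowest-degree argument; and Verdier duality gives $\Ext^k(\bbC_0,\IC_{e_1})\simeq\Ext^k(\IC_{e_1},\bbC_0)$ with no dual.
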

\begin{proof}
	Note that both sheaves come from $ V_F/\VO $. Indeed, they support inside $ t^{-1}\VO/\VO=V $. Let $ i\colon\pt \rightarrow V $ be the embedding of zero point. Then
	\begin{equation*}
		\Hom(\dV\cn\std_n,\dV)=\Hom(i^*(\dV\cn\std_n),\delta_\pt). 
	\end{equation*}
	Note that $ \dV\cn\std_n $ is an intersection complex, and thus is self dual. Hence
	\begin{equation*}
		\Hom(\dV,\dV\cn\std_n)=i^!(\dV\cn\std_n)=D(i^*(\dV\cn\std_n)).
	\end{equation*}
	From the calculation in the previous section, we know $ i^*(\dV\cn\std_n) $ is $ H^*(\Pn)[m+n-1] $. With equivariant structure, it is $ H^*_\GLm(\pt)\otimes H^*_\GLn(\Pn)[m+n-1] $. 
	
	Taking the dual in $ D(\pt/(\GLm\times\GLn))=H^*_\GLm(\pt)\otimes H^*_\GLn(\pt)-\mathrm{mod} $, and using the fact that $ H^*_\GLn(\Pn) $ is a free $ H^*_\GLn(\pt) $-module, we see the resulting $ \Hom $ is 
	\begin{equation*}
		H^*_\GLm(\pt)\otimes H^*_\GLn(\pt)[1-m-n]\oplus H^*_\GLm(\pt)\otimes H^*_\GLn(\pt)[3-m-n]\oplus\cdots\oplus H^*_\GLm(\pt)\otimes H^*_\GLn(\pt)[n-m-1]. 
	\end{equation*}
	This proves the required proposition. 
\end{proof}
As for the part $ \bbC^{(m-n)^2}\subset\glm $, first note that $ \Ext^{<0}(\dV,\dV)=0 $, the only remaining parts is those with weights greater or equal to $ -2 $. 

More precisely, let the image of $ \Ext^2(\dH,\dH\cH\glm) $ to $ \Ext^2(\dV^{\oplus(m-n)\shear},\dV^{\oplus(m-n)\shear}) $ be 
\begin{equation*}
	f_{i,j}\in\Ext^{2(j-i+1)}(\dV,\dV), 1\le i,j\le m-n.
\end{equation*}
Then $ f_{i,j}=0 $ for $ i>j+1 $, and $ f_{i+1,i} $ are some constant numbers. 

In order to show the image of $ \glm $ pass through the Slodowy slice, we need the following result:
\begin{prop}
	For fixed $ j-i=k\ge0 $, all the $ f_{i,j}\in\Ext^{2k+2}(\dV,\dV) $ are colinear to each other. 
\end{prop}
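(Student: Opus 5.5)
The plan is to derive colinearity from equivariance, exploiting the fact that $\Ext^{2k+2}(\dV,\dV)$ is essentially a free module over the equivariant cohomology ring and is controlled by the principal $\sl_2$ inside $\mathfrak{gl}_{m-n}$. Under $\gRes$, the summand $\bbC^{m-n}\subset\std_m$ restricts to the trivial $\Gl$-representation, sheared by the cocharacter of the principal $\sl_2$. So the block $\bbC^{(m-n)^2}\subset\glm$ is $\mathfrak{gl}_{m-n}$, with the grading given by $\Ad(h)$. The $f_{i,j}$ are the images of the matrix units $E_{ij}$ under the composite map of the previous display, restricted to this block.

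First I would show that this composite is equivariant for the centralizer of the grading inside $\GL_{m-n}$, that is, the maximal torus $T_{m-n}$. Its weights are recorded by the shift $[2(j-i)]$, and this is why $f_{i,j}$ lands in $\Ext^{2(j-i+1)}$. Next I would use the structure of $\Ext^*(\dV,\dV)$ as a module over $H^*_{\GO\times\HO}(\pt)$. By the computation in the case $n=1$, and more generally because $\dV=\IC_0$ is the constant sheaf on $\VO$, we have $\Ext^*(\dV,\dV)=H^*_{\GLn\times\GLm}(\pt)$. Its degree-$(2k+2)$ piece is spanned by polynomials in the Chern classes. The key point is then to pin down which element each $f_{i,j}$ is.

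For this I would use the description of $\Sym(\sh{\hl})$ via \cite{BF08}. The image of $\Ext^2(\dH,\dH\cH\glm)\otimes\glm$ in $\Ext^{*}(\dH,\dH)=H^*_{\GLm}(\pt)$, after restricting to the diagonal block, is given by the Kostant-type map $\glm\to\Sym(\glm)^{\GLm}$-linear data. Concretely, the matrix entry $E_{ij}$ is paired with the coordinate function $x_{ji}$, and invariant polynomials evaluated along the Kostant/Slodowy slice give the $(i,j)$ entry as a single invariant of degree $j-i+1$, depending only on $j-i$. I would make this concrete as follows. The action of $\Ext^*(\dH,\dH)=\Sym(\glm[-2])^{\GLm}$ on $\dV$ factors through $H^*_{\GLm}(\pt)\to\Ext^*(\dV,\dV)$. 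The element $f_{i,j}$ is the image of the $(i,j)$-coefficient of a generic element of the slice, which by the Kostant section description is, up to scalar, the elementary symmetric function $c_{k+1}$ restricted to the last $m-n$ coordinates, independent of $i$.

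The main obstacle is this identification step: controlling how $\Ext^2(\dV^{\shear},\dV^{\shear})$ with shifts decomposes, and proving that the off-diagonal contributions along the diagonal $j-i=k$ all come from the same generator. As a fallback, I would argue by a $\sl_2$ weight argument. The $f_{i+1,i}$ are nonzero constants (known), and commutation with $e$ (which acts by the $f_{i+1,i}$) forces the relation $f_{i,j}f_{j+1,j}=f_{i+1,i}f_{i+1,j+1}$ up to lower terms. Induction on $k$ then shows each diagonal consists of scalar multiples of one class.
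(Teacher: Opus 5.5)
\paragraph{Main route.} Your main route has a genuine gap, at exactly the step you call the main obstacle. Identifying $f_{i,j}$ with ``the $(i,j)$-coefficient of a generic element of the slice'' presupposes that the image of $\glm$ already lies in $f+\mathfrak{z}_{\hl}(e)$ for the chosen splitting $\dV\cm\std_m\simeq\dV\cn\std_n\oplus\bigoplus_s\dV[s]$. That is exactly what this proposition, together with the preceding $\Ext$-vanishing, is meant to establish, so the argument is circular.

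The auxiliary steps do not replace it:
\begin{itemize}
\item A $T_{m-n}$-symmetry fixing $f$ cannot exist, since it would rescale the nonzero constants $f_{l+1,l}$ by $t_{l+1}/t_l$. The only symmetry actually present, the $\Gm$ through $\exp(th)$, just records the degree $2(j-i+1)$, which you already know.
\item Knowing $f_{i,j}\in H^{2k+2}_{\GLn\times\GLm}(\pt)$ gives no colinearity, since that space has dimension at least $2$.
\item The invariant part $\Ext^*(\dH,\dH)$ cannot see individual matrix units $E_{ij}$.
\item ``$c_{k+1}$ restricted to the last $m-n$ coordinates'' is not $\Sm$-invariant, so it is not an equivariant class.
\end{itemize}
In fact colinearity depends on the splitting. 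For $m-n=2$, conjugating by $\left(\begin{smallmatrix}1&b\\0&1\end{smallmatrix}\right)$ with $b\in\Ext^2(\dV,\dV)$ turns equal diagonal entries $a_1,a_1$ into $a_1+bc$ and $a_1-bc$, where $c=f_{2,1}$. So no argument from symmetry alone can prove the statement.

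\paragraph{Fallback.} Your fallback is essentially the paper's proof. The paper argues that $F=(f_{i,j})$ commutes with its degree-zero part $N=(f_{l+1,l})$: $f$ commutes with a one-parameter family $f^{(t)}$, and one lets $t\to0$. Comparing entries gives the squares $f_{i+1,i}\circ f_{i,j}=f_{i+1,j+1}\circ f_{j+1,j}$. Since the $f_{l+1,l}$ are nonzero scalars, $f_{i+1,j+1}$ is directly a nonzero multiple of $f_{i,j}$, with no induction needed. Your relation pairs the factors the other way round; this is harmless for colinearity but is not what commutation yields.

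Your version has three problems:
\begin{itemize}
\item It gives no reason why $F$ should commute with $N$, and that is the whole content of the argument.
\item ``Up to lower terms'' must be dropped. Commuting with a degree-zero matrix gives an exact homogeneous identity. With genuine correction terms, induction on $k$ would only give $f_{i+1,j+1}\in\bbC f_{i,j}+(\text{corrections})$, which is not colinearity.
\item The commutation needs care. If $FN=NF$ held for the full $(m-n)\times(m-n)$ matrices, the first row and last column would give $f_{1,j+1}f_{j+1,j}=0$ and $f_{i+1,i}f_{i,m-n}=0$. That forces every $f_{i,j}$ with $j>i$ to vanish, contradicting the Slodowy-slice shape with free $a_2,\dots,a_{m-n}$ that the section is aiming for.
\end{itemize}
So you must state exactly which commutation relations hold and prove them; as a blanket identity it proves too much.
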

\begin{proof}
	As an action on $ \dV\cn\std_m $, the element $ f\in\Ext^2(\dV\cn\std_m,\dV\cn\std_m) $ always commutes with itself. 
	
	In particular, $ f $ must commute with its degree $ 0 $-part. For the 1-parameter family $ f^{(t)} $, we should have $ ff^{(t)}=f^{(t)}f $. Take the limit $ t\rightarrow 0 $, we see $ ff^{(0)}=f^{(0)}f $. 
	
	Now $ f^{(0)} $ are the isomorphisms $ f_{i+1,i} $. So we have the commutative diagrams:
	\begin{equation*}
		\begin{tikzcd}
			\dV\rar{f_{i,j}}\dar{f_{j+1,j}}&\dV[2k+2]\dar{f_{i+1,i}}\\
			\dV\rar{f_{i+1,j+1}}&\dV[2k+2]
		\end{tikzcd}
	\end{equation*}
	Hence $ f_{i,j} $ are colinear for fixed $ j-i=k $. 
\end{proof}
\subsection{Compatibility of two actions}
The action of $ \GLn $ comes from the map 
\begin{equation*}
	\gln\otimes\Ext^2(\dG,\dG\cG\gln)\rightarrow \gln\otimes\Ext^2(\dV,\dV\cG\gln)=\gln\otimes\Ext^2(\dV\cG\std_n,\dV\cG\std_n).
\end{equation*}
As calculated in Section \ref{std}, $ \Ext^2(\dV\cG\std_n,\dV\cG\std_n) $ has only one dimension after dividing $ \Ext^2(\dV,\dV) $. Hence the image of $ \Ext^2(\dH,\dH\cH\glm) $ to $ \Ext^2(\dV\cG\std_n,\dV\cG\std_n) $ is a multiple of the image of $ G $. Hence we can have the compatibility:
\begin{equation*}
	\begin{tikzcd}
		\Sym(\gl[-2])\dar\rar&\OS\\
		\Sym(\hl[-2])\rar&\calO(S_f)\uar\\
	\end{tikzcd}
\end{equation*}

In particular, the map $ \calO(S_f)\rightarrow \OS $ is a $ H^*_{G\times H}(\pt) $-algebra homomorphism. 
\subsection{Linear algebra}
We calculate the fiber of the map $S_f\rightarrow \gl^*\sslash\Gl\times\hl^*\sslash\Hl$ up to codimension one. 

Elements in $ S_f $ look like
\begin{equation*}
	\begin{pmatrix}
		x&&&&&v\\
		v^*&a_1&a_2&a_3&\cdots&a_{m-n}\\
		&c_1&a_1&a_2&\cdots&a_{m-n-1}\\
		&&\ddots&\ddots&\ddots&\vdots\\
		&&&c_{m-n-2}&a_1&a_2\\
		&&&&c_{m-n-1}&a_1
	\end{pmatrix}, x\in\gln,v\in\std_n,v^*\in\std_n^*,
\end{equation*}
where $ c_i $ are positive constants. Its image is given by characteristic polynomial of $ x $ and this whole matrix. The later is calculated by
\begin{equation*}
	\chi_x(\lambda)(\lambda^{m-n}-d_1a_1\lambda^{m-n-1}+\cdots+(-1)^{m-n} d_{m-n}a_{m-n}+d_{m-n+1}v^*(\lambda I-x)^{-1}v). 
\end{equation*}
Here $ d_i $ are positive constants. 
\begin{prop}
	The fiber at a generic point of $\gl^*\!\sslash\Gl\times\hl^*\!\sslash\Hl$ is isomorphic to $\Gl$. 
\end{prop}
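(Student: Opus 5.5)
The plan is to make the map $S_f\rightarrow \gl^*\sslash\Gl\times\hl^*\sslash\Hl$ explicit using the characteristic-polynomial formula above, and then solve for the fibre directly. Write $\phi=\chi_x$ for the characteristic polynomial of $x$, a monic polynomial of degree $n$, and $\Phi$ for the characteristic polynomial of the whole matrix, of degree $m$. A generic point of the base is a pair $(\phi,\Phi)$ with $\phi$ having distinct roots $\lambda_1,\dots,\lambda_n$ and $\Phi(\lambda_i)\neq0$ for all $i$.

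\textbf{Step 1: the $a$-coordinates are forced.} Fix $x$ with $\chi_x=\phi$. The displayed formula gives
\begin{equation*}
\Phi(\lambda)=\phi(\lambda)\Big(\lambda^{m-n}-d_1a_1\lambda^{m-n-1}+\cdots+(-1)^{m-n}d_{m-n}a_{m-n}\Big)+d_{m-n+1}\,v^*\operatorname{adj}(\lambda I-x)v.
\end{equation*}
The last term has degree at most $n-1$ in $\lambda$. Dividing $\Phi$ by $\phi$ with remainder, $\Phi=\phi q+r$ with $\deg r<n$, the quotient $q$ is monic of degree $m-n$. Since the $d_i$ are nonzero constants, $q$ determines $a_1,\dots,a_{m-n}$ uniquely. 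The remaining condition is
\begin{equation*}
d_{m-n+1}\,v^*\operatorname{adj}(\lambda I-x)v=r(\lambda).
\end{equation*}

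\textbf{Step 2: the $(x,v,v^*)$ part.} It remains to show that
\begin{equation*}
Z=\{(x,v,v^*):\chi_x=\phi,\ v^*\operatorname{adj}(\lambda I-x)v=r/d_{m-n+1}\}
\end{equation*}
is a $\Gl=\GLn$-torsor, where $\GLn$ acts by $g\cdot(x,v,v^*)=(gxg^{-1},gv,v^*g^{-1})$. This is the only place the group structure enters.

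Since $\phi$ has distinct roots, $x$ is regular semisimple, so we may conjugate it to $\diag(\lambda_1,\dots,\lambda_n)$. Its stabilizer is then the torus $T=\Gm^n$. On the diagonal form the identity becomes
\begin{equation*}
\sum_i v^*_iv_i\prod_{j\neq i}(\lambda-\lambda_j)=r(\lambda)/d_{m-n+1}.
\end{equation*}
Evaluating at $\lambda=\lambda_i$ gives $v_i^*v_i=r(\lambda_i)/\big(d_{m-n+1}\phi'(\lambda_i)\big)$. Since $r(\lambda_i)=\Phi(\lambda_i)\neq0$, each product $v_i^*v_i$ is a prescribed nonzero constant.

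\textbf{Step 3: the torsor structure.} The solution set for $(v,v^*)$ is therefore $\{v_iv_i^*=c_i\neq0\}\cong\Gm^n$, and $T$ acts on it simply transitively by $v_i\mapsto t_iv_i$, $v_i^*\mapsto t_i^{-1}v_i^*$. Combining this with the regular semisimple orbit $\GLn/T$ of $x$ gives $Z\cong\GLn\times^T T\cong\GLn$. Hence the fibre is isomorphic to $\Gl$.

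The main obstacle is bookkeeping rather than substance. One must confirm that the displayed formula uses the adjugate, i.e. $\phi(\lambda)v^*(\lambda I-x)^{-1}v$, so that the correction term really has degree below $n$, and that the signs and constants $d_i$ are all invertible, so that Step 1 is a bijection. Once the formula is normalised this way, the genericity conditions ($\operatorname{disc}\phi\neq0$ and $\operatorname{Res}(\phi,\Phi)\neq0$) define an open dense subset of the base, which is the sense of ``generic point'' in the statement.
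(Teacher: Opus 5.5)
Your proof is correct and follows the paper's argument essentially step for step. Division with remainder fixes the $a_i$, diagonalizing $x$ turns the remainder condition into prescribed products $v_i^*v_i$, and the diagonal torus acts simply transitively on the solutions (the paper phrases this as normalizing $(g^{-1}v)_i=1$).

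You are slightly more explicit than the paper in two places: why these products are nonzero ($r(\lambda_i)=\Phi(\lambda_i)\neq0$), and the torsor structure $\GLn\times^T T\cong\GLn$. One caveat is inherited from the paper's displayed formula. The coefficients of $q$, the characteristic polynomial of the lower-right $(m-n)\times(m-n)$ block, are triangular rather than linear in the $a_i$; for $m-n=2$ the constant term is $a_1^2-c_1a_2$. This still determines the $a_i$ uniquely, as Kostant's theorem on the slice guarantees.
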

\begin{proof}
	Given two polynomials $ f(\lambda),g(\lambda) $ of degree $ n $ and $ m $. If the discriminant of $ f $ and the resultant of $ f $ and $ g $ are non-zero, we will show the fiber is $ \GLn $. 
	
	Write $ g=qf+r $ such that $ \deg r<n $. Then we know $ q(\lambda)=\lambda^{m-n}-d_1a_1\lambda^{m-n-1}+\cdots+(-1)^{m-n} d_{m-n}a_{m-n} $, which shows that $ a_i $ are fixed. 
	
	Since $ x $ has characteristic polynomial $ \chi_x(\lambda)=f(\lambda) $ with distinct roots, $ x $ is conjugated to a diagonal matrix by an element in $ \GLn $. Write $ x=g\diagmat(\lambda_1,\ldots,\lambda_n)g^{-1} $. Then $ v^*(\lambda I-x)^{-1}v=\sum (v^*g)_i(g^{-1}v)_i\frac{1}{\lambda-\lambda_i} $. Hence we have 
	\begin{equation*}
		(v^*g)_i(g^{-1}v)_i=e_i:=\frac{r(\lambda_i)}{d_{m-n+1}\prod_{j\neq i}(\lambda_i-\lambda_j)}. 
	\end{equation*}

	By taking an action of a diagonal matrix, $ x $ is unchanged, and we can make $ (g^{-1}v)_i=1 $. 
	In conclusion, $ x=g\diagmat(\lambda_1,\ldots,\lambda_n)g^{-1},v=g(1,\ldots,1)^\transpose,v^*=(e_1,\ldots,e_n)g^{-1} $ gives all the possible fibers and they are different. 
\end{proof}
\begin{prop}
	The generic fiber at the hyperplane given by resultant is isomorphic to $ (\GLn\times+)/\Gm $. 
\end{prop}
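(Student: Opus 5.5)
The plan is to rerun the argument of the previous proposition, now over a generic point of the resultant hypersurface. Take $f$ of degree $n$ with nonzero discriminant, so its roots $\lambda_1,\ldots,\lambda_n$ are distinct. Take $g$ of degree $m$ such that $\mathrm{Res}(f,g)=0$ but generically, so that $g$ shares exactly one root with $f$, say $\lambda_1$, and this root is simple for $g$.

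As before, write $g=qf+r$ with $\deg r<n$. The quotient $q$ again pins down the $a_i$. Conjugating by $\GLn$, we may take $x=\diagmat(\lambda_1,\ldots,\lambda_n)$. The equation on $v,v^*$ becomes
\begin{equation*}
	v^*_iv_i=e_i:=\frac{r(\lambda_i)}{d_{m-n+1}\prod_{j\neq i}(\lambda_i-\lambda_j)},\qquad 1\le i\le n.
\end{equation*}
Since $r(\lambda_i)=g(\lambda_i)$, we get $e_1=0$ and $e_i\neq0$ for $i\ge2$ on the generic part of the hypersurface. For $i\ge2$, the torus element normalizes $v_i=1$ and $v^*_i=e_i$ exactly as before. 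For $i=1$, the constraint $v^*_1v_1=0$ cuts out the union of the two coordinate axes $\{(v_1,v^*_1):v_1v^*_1=0\}$ in $\bbA^2$. This is what I read as the "$+$" in the statement: the cross.

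Next, the stabilizer and the gluing to the full fiber. The stabilizer of $x$ in $\GLn$ is the diagonal torus $T$. Its coordinates $t_2,\ldots,t_n$ are used up normalizing $v_i=1$ for $i\ge2$. The remaining $\Gm=\{t_1\}$ acts on the cross by $(v_1,v^*_1)\mapsto(t_1v_1,t_1^{-1}v^*_1)$. So the fiber is
\begin{equation*}
	\GLn\times^{T}\bigl(\{v_1v^*_1=0\}\times\textstyle\prod_{i\ge2}\{v_iv^*_i=e_i\}\bigr)\simeq\GLn\times^{\Gm}+,
\end{equation*}
realized by $(h,(v_1,v^*_1))\mapsto\bigl(hxh^{-1},\,h(v_1,1,\ldots,1)^\transpose,\,(v^*_1,e_2,\ldots,e_n)h^{-1}\bigr)$. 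This is the quotient $(\GLn\times+)/\Gm$ with $\Gm$ acting diagonally, by right multiplication in the first coordinate of $\GLn$ and by the weights $(1,-1)$ on the cross. I would check that the map is an isomorphism onto the fiber, not just a bijection, by comparing tangent spaces. Away from the origin of the cross this is the free torsor picture of the previous proposition. At the origin both sides are reduced of dimension $n^2$, with the expected nodal singularity.

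The main obstacle is the genericity bookkeeping and the scheme structure. First, the "generic point of the resultant hypersurface" must really avoid the loci where $f$ has a repeated root, $g$ has a double root at $\lambda_1$, or $f,g$ share two roots. Second, the scheme-theoretic fiber must be reduced, so that it is $(\GLn\times+)/\Gm$ and not a nonreduced thickening. For the latter I would compute that the differential of $(x,v,v^*)\mapsto(\chi_x,\chi_{\text{whole}})$ has full rank at smooth points of the cross. The only degeneration is in the $e_1$-direction, where $\partial(v_1v^*_1)$ vanishes exactly at the node. That gives a reduced nodal fiber, matching the local model $\{uv=s\}$ over the base coordinate $s=\mathrm{Res}(f,g)$.
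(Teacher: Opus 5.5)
Your proposal is correct and follows the paper's proof: you diagonalize $x$, read off $v_i^*v_i=e_i$ from the partial-fraction formula for $r$, note that $e_{i_0}=0$ produces the cross $+$ while the other coordinates are normalized by the diagonal torus, and quotient by the remaining $\Gm$. Your added care about genericity and reducedness goes beyond the paper's terse argument. Reducedness is in fact immediate: for diagonal $x$ with distinct eigenvalues, the condition on $r$ is equivalent, by invertibility of the Vandermonde matrix, to the $n$ equations $v_i^*v_i=e_i$, so the tangent-space comparison is not needed.
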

\begin{proof}
	Given two polynomials $ f(\lambda), g(\lambda) $ of degree $ n $ and $ m $. Assume the discriminant of $ f $ is non-zero and the resultant of $ f $ and $ g $ is zero. 
	
	Write $ g=qf+r $ such that $ \deg r<n $. Then we know $ q(\lambda)=\lambda^{m-n}-d_1a_1\lambda^{m-n-1}+\cdots+(-1)^{m-n} d_{m-n}a_{m-n} $, which shows that $ a_i $ are fixed. 
	
	Write $ x=g\diag(\lambda_1,\ldots,\lambda_n)g^{-1} $ for $ g\in\GLn $. The condition of the resultant implies $ r(\lambda_{i_0})=0 $ for one $ i_0\in\{1,\ldots,n\} $. 
	
	We have the same formula as before. When $ i\neq i_0 $, $ (v^*g)_i $ and $ (g^{-1}v)_i $ has a single orbit under $ \Gm $-action. When $ i=i_0 $, $ (v^*g)_i $ and $ (g^{-1}v)_i $ form the space $ +=\Spec\bbC[X,Y]/(XY) $. 
	
	In conclusion, we have a surjective map from $ \GLn\times+\times\Gm^{n-1} $ to the fiber, which is stable under the action of diagonal matrices in $ \GLn $. Hence we get the isomorphism of the fiber and the space $ (\GLn\times+)/\Gm $. 
\end{proof}
\begin{prop}
	The generic fiber at the hyperplane given by root hyperplanes of $ \gln\sslash\GLn $ is isomorphic to $ \GLn $. 
\end{prop}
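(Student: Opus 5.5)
The plan is to argue, as in the two preceding propositions, by making the fiber explicit, but this time to replace diagonalization of $x$ with a minimal-realization (controllability/observability) argument. That argument does not care whether $x$ is semisimple, and the double root is exactly where semisimplicity fails.

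First we fix the data. Take a point of $\gl^*\sslash\Gl\times\hl^*\sslash\Hl$ given by polynomials $f,g$ of degrees $n,m$ such that:
\begin{itemize}
\item $f$ has exactly one double root $\lambda_1=\lambda_2$ and its other roots are simple;
\item the resultant of $f$ and $g$ is non-zero.
\end{itemize}
This is the generic point of the root hyperplane. As before, write $g=qf+r$ with $\deg r<n$. The coefficients of $q$ determine the $a_i$ uniquely, so the fiber is the set of triples $(x,v,v^*)$ with $\chi_x=f$ and
\begin{equation*}
v^*(\lambda I-x)^{-1}v=\frac{r(\lambda)}{d_{m-n+1}f(\lambda)}.
\end{equation*}
The non-vanishing of the resultant means exactly that $r$ and $f$ are coprime. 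In particular $r(\lambda_1)\neq 0$, so the right-hand side has a genuine double pole at $\lambda_1$.

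Second, we deduce that $v$ is cyclic for $x$ and $v^*$ is cyclic for $x^\transpose$. Suppose $v$ were not cyclic. Let $W=\bbC[x]v$, a subspace of dimension $k<n$. Then $v^*(\lambda I-x)^{-1}v$ equals $v^*|_W(\lambda I-x|_W)^{-1}v$, a rational function whose denominator $\chi_{x|_W}$ has degree $k<n$. This contradicts the fact that $r/f$ is in lowest terms with denominator of degree $n$. The dual argument, using the quotient by the annihilator, handles $v^*$.

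Cyclicity has two consequences. First, $x$ is regular, so in our case the semisimple matrix $\diag(\lambda_1,\lambda_1,\ldots)$ is excluded; one can also see this directly, since that matrix would only produce simple poles. Second, we can pass to the basis $v,xv,\ldots,x^{n-1}v$. In this basis $x$ becomes the companion matrix of $f$ and $v$ becomes $e_1$, while $v^*$ is determined by the Laurent coefficients $v^*x^kv$ of $r/(d_{m-n+1}f)$ at $\infty$. Hence every point of the fiber is $\GLn$-conjugate to one fixed triple $(x_0,v_0,v^*_0)$, where $x_0$ is the companion matrix, $v_0=e_1$, and $v_0^*$ is read off from the expansion. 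Conversely, this triple does satisfy the equation: a realization of the same transfer function in companion form exists because $\deg r<\deg f$.

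Third, the stabilizer is trivial. If $h\in\GLn$ fixes $x_0$ and $v_0$, then $h$ fixes $x_0^kv_0$ for all $k$; these vectors form a basis, so $h=1$. Hence the orbit map $\GLn\to$ fiber, $h\mapsto h\cdot(x_0,v_0,v_0^*)$, is a bijection on points.

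The step I expect to be the real obstacle is upgrading this bijection to an isomorphism of schemes. Here I would use that the cyclicity conditions are open, so the orbit map identifies $\GLn$ with an open subscheme of the fiber. It then remains to check that the fiber is reduced at this point. For that I would verify that the map $S_f\to\gl^*\sslash\Gl\times\hl^*\sslash\Hl$ is smooth along the fiber, by computing its differential at $(x_0,v_0,v_0^*)$. Infinitesimally, varying $(x,v,v^*)$ within $\chi_x=f$ changes $r$ arbitrarily through $v^*$, and changes $f$ arbitrarily through $x$ in companion form. Together with dimension count, this gives surjectivity of the differential.

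The argument is uniform in $f$: nothing in it uses the double root except to show that the semisimple stratum is excluded. It therefore also re-proves the generic case, which supports the claim that, away from the resultant hyperplane, the fiber stays a $\GLn$-torsor even where the discriminant of $f$ vanishes.
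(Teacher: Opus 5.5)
Your proof is correct, but it is organized differently from the paper's. The paper works in Jordan normal form. It excludes diagonalizable $x$ (only simple poles would force $r(\lambda_0)=0$), conjugates $x$ to a matrix $x_0$ with a single $2\times 2$ Jordan block, and expands $v^*(\lambda I-x)^{-1}v$ in partial fractions. From this it reads off that $(v^*g)_1(g^{-1}v)_2$ (from the double pole) and $(v^*g)_i(g^{-1}v)_i$ for $i\ge 3$ are fixed nonzero numbers, while $(v^*g)_1(g^{-1}v)_1+(v^*g)_2(g^{-1}v)_2$ is fixed. It concludes because the centralizer $\bigl\{\bigl(\begin{smallmatrix}a&b\\ 0&a\end{smallmatrix}\bigr)\bigr\}$ acts simply transitively on the admissible pairs $\bigl((g^{-1}v)_1,(g^{-1}v)_2\bigr)\in\bbA^1\times(\bbA^1\setminus\{0\})$. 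In effect the paper checks by hand, for this one Jordan type, the two facts you isolate intrinsically: $v$ must be cyclic, and the centralizer of a regular element acts simply transitively on cyclic vectors. The paper's computation runs parallel to the two neighbouring propositions, where the same partial-fraction bookkeeping exhibits the failure of cyclicity on the resultant hyperplane. Your minimal-realization argument needs no case analysis on Jordan types, so it shows at once that the fiber is a free $\GLn$-orbit over the whole complement of the resultant divisor, higher-multiplicity roots of $f$ included. It also addresses the scheme structure, which the paper only treats on points. Your smoothness check is valid but can be bypassed. On the open locus where $h=[v,xv,\ldots,x^{n-1}v]$ is invertible, $(x,v,v^*)\mapsto h$ is a morphism inverse to the orbit map on $R$-points, since Cayley--Hamilton and the determination of the $v^*x^kv$ by $r/f$ hold over any ring. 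This locus contains every point of the fiber, so it is the whole fiber scheme-theoretically.
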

\begin{proof}
	Now we have two polynomials $ f(\lambda) $ and $ g(\lambda) $ of degree $ n $ and $ m $ with non-zero resultant, but $ f $ has a double root $ \lambda_0 $. 
	
	Write $ g=qf+r $ such that $ \deg r<n $. Then we know $ q(\lambda)=\lambda^{m-n}-d_1a_1\lambda^{m-n-1}+\cdots+(-1)^{m-n} d_{m-n}a_{m-n} $, which shows that $ a_i $ are fixed. 
	
	If $ x $ is conjugate to a diagonal matrix, then $ v^*(\lambda I-x)^{-1}v=\sum(v^*g)_i(g^{-1}v)_i\frac1{\lambda-\lambda_i} $, implying $ r(\lambda_0)=0 $. This contradicts with the resultant condition. 
	
	Thus $ x $ must conjugate to
	\begin{equation*}
		x_0=\begin{pmatrix}
			\lambda_0&1&&&\\
			&\lambda_0&&&\\
			&&\lambda_3&&\\
			&&&\ddots&\\
			&&&&\lambda_n
		\end{pmatrix}.
	\end{equation*}
	Then 
	\begin{align*}
		v^*(\lambda I-x)^{-1}v=&\sum_{i\ge3}(v^*g)_i(g^{-1}v)_i\frac1{\lambda-\lambda_i}+\\&+\big((v^*g)_1(g^{-1}v)_1+(v^*g)_2(g^{-1}v)_2\big)\frac1{\lambda-\lambda_0}-(v^*g)_1(g^{-1}v)_2\frac1{(\lambda-\lambda_0)^2}.
	\end{align*}
	Hence for $ i\ge 3 $, we still have $ (v^*g)_i(g^{-1}v)_i $ is a fixed non-zero number. Also, 
	\begin{equation*}
		(v^*g)_1(g^{-1}v)_2=e_1:=-\frac{r(\lambda_0)}{d_{m-n+1}\prod_{i\ge3}(\lambda_0-\lambda_i)}
	\end{equation*}
	is a fixed non-zero number. 
	
	Now consider $ \frac1{\lambda-\lambda_0}(r(\lambda)-r(\lambda_0))$. Its value at $ \lambda_0 $ gives 
	\begin{equation*}
		d_{m-n+1}\big((v^*g)_1(g^{-1}v)_1+(v^*g)_2(g^{-1}v)_2\big) \prod_{i\ge3}(\lambda-\lambda_i).
	\end{equation*}
	which implies
	\begin{equation*}
		(v^*g)_1(g^{-1}v)_1+(v^*g)_2(g^{-1}v)_2=e_2:=\frac{r'(\lambda_0)}{d_{m-n+1}\prod_{i\ge3}(\lambda_0-\lambda_i)}
	\end{equation*}
	is a fixed (possibly zero) number. 
	
	In conclusion, the pair $ \big((g^{-1}v)_1,(g^{-1}v)_2\big)\in \bbA^1\times(\bbA^1\backslash\{0\}) $ determines $ (v^*g)_1 $ and $ (v^*g)_2 $. 
	
	Note that the group $ Z_{\GL_2}(\begin{smallmatrix}
		\lambda_0&1\\&\lambda_0
	\end{smallmatrix})=\{(\begin{smallmatrix}
	a&b\\&a
\end{smallmatrix})\} $ acts fully faithfully on the space $ \bbA^1\times(\bbA^1\backslash\{0\}) $. We have the action of $ \GLn $ on $ (x=x_0,v=(0,1,\ldots,1)^\transpose,v^*=(e_1,\ldots,e_n)) $ gives all possibilities of the fiber with trivial stabilizers. 
\end{proof}
\subsection{Localization}
\begin{prop}
	$ \OS $ is normal. 
\end{prop}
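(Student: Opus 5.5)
We will prove normality of $\OS$ via Serre's criterion $R_1+S_2$, using the fiber computations of the previous subsection together with freeness over the base. Set $B=\calO(\gl^*\sslash\Gl\times\hl^*\sslash\Hl)=H^*_{\GO\times\HO}(\pt)$, a polynomial ring.

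\textbf{Step 1: $\OS$ is a finitely generated, flat (indeed free) $B$-module.} We use the description
\begin{equation*}
\OS=\bigoplus_{\lambda}\Hom(\dV,\dV\cG V_\lambda)\otimes V_\lambda^*.
\end{equation*}
Each $\Hom(\dV,\dV\cG V_\lambda)$ is a $*$-stalk/costalk computation of an equivariant sheaf on $V_F$. By the irreducible-object classification (Corollary after Proposition~\ref{GL index}), the orbits are indexed by $X_\bullet^+(\GLn)$. I expect a parity/purity argument, as in Section~\ref{std} where the stalks are $H^*(\Pn)$-type and even up to a fixed shift, to show that these Homs are free over $B$. The model is the proof of the vanishing proposition, where the Hom came out as a free $H^*_\GLm\otimes H^*_\GLn$-module. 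Freeness over a regular ring gives Cohen--Macaulayness of $\OS$ relative to $B$ and hence $S_2$, provided $\OS$ is finitely generated. Finite generation should follow from the map $\calO(S_f)\to\OS$ constructed in the compatibility subsection, once we know it is surjective, or at least finite.

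\textbf{Step 2: the map $\calO(S_f)\to\OS$ is an isomorphism after localizing away from codimension two in $\Spec B$.} Over a generic point, the fibre of $S_f\to\Spec B$ is a $\Gl$-torsor, and over the two kinds of codimension-one hyperplanes the fibres were identified above as $(\GLn\times+)/\Gm$ and $\GLn$. On the sheaf side we localize $\OS$ at the same points. At a generic point, equivariant localization reduces $\OS$ to the torus-fixed part, which looks like $\calO(\Gl)$. At the resultant hyperplane, the $n=1$ computation applies: it gave $\bbC[a,x,y]$ with $xy$ equal to the Euler class, and the node $XY=0$ appears exactly as in the fibre $(\GLn\times+)/\Gm$. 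At the root hyperplanes, the $\GL_2$ Levi computation reduces to the group case and is smooth. Matching these local models shows that $\Spec\OS\to\Spec B$ has reduced fibres in codimension $\le1$ and that $\Spec\OS$ is regular there, in each case by an explicit computation. This gives $R_1$.

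\textbf{Step 3: conclude.} Combining $S_2$ from Step 1 with $R_1$ from Step 2, Serre's criterion yields that $\OS$ is normal. As a byproduct, $\calO(S_f)\to\OS$ is an isomorphism in codimension one between normal, $S_2$ rings, hence an isomorphism; this is the input needed for the localization argument that follows.

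\textbf{Main obstacle.} The hardest part is Step 2's identification at the resultant hyperplane, i.e.\ showing that the localized $\OS$ is precisely the rank-one model with $xy$ equal to the Euler class rather than something non-reduced. I would first reduce, via the torus $\Gm^n\subset\GLn$ and equivariant localization to its fixed points, to $n$ copies of the $n=1$ case. I would then use the lemma on Thom and Euler classes to control the product $xy$ exactly as was done there.
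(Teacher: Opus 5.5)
\textbf{Verdict: there is a genuine gap.} The step that carries the proposition is left unproved, and the steps you add are circular.

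The paper's proof of this proposition is only your Step~1. After choosing a splitting $\VF/\VO\to\VF$, it writes
$\OS=\bigoplus_{W\in\Irr\Gl}\Hom(\dV,\dV\cG W)=\bigoplus_{W\in\Irr\Gl} i_0^!(\dV\cG W)$.
As a direct sum of costalks, $\OS$ is then free over $B=H^*_\GLn(\pt)\otimes H^*_\GLm(\pt)$. Each summand $\Hom(\dV,\dV\cG W)$ is a finitely generated free $B$-module, hence reflexive. That is all the subsequent localization argument uses: a $\Gl$-equivariant $B$-linear map $\calO(S_f)\to\OS$ of such modules that is an isomorphism over every prime of $B$ of height $\le 1$ is an isomorphism. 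In your proposal this freeness is exactly the part left as ``I expect a parity/purity argument''. So the one input that actually carries the proposition is not supplied.

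Your Steps~2--3 do not make up for this, and as written they are circular and partly incorrect.
\begin{itemize}
\item Serre's criterion needs $\OS$ Noetherian. You propose to get this from surjectivity or finiteness of $\calO(S_f)\to\OS$, which is the main theorem this proposition feeds.
\item The codimension-one identification of the localized $\OS$ with the localized $\calO(S_f)$ is also downstream. This includes the computation that $xy$ is the Euler class at the resultant hyperplane. It is the paper's next step, which comes after this proposition and relies on it.
\item The inference ``free over a regular ring, hence Cohen--Macaulay, hence $S_2$'' fails here. $\OS$ is free of \emph{infinite} rank over $B$, since its generic fibre is $\Gl$, of dimension $n^2$. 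For the flat local maps $B_p\to\OS_P$ one has $\mathrm{depth}\,\OS_P=\mathrm{depth}\,B_p+\mathrm{depth}\,(\OS_P/p\OS_P)$. So $S_2$ for the ring $\OS$ also requires conditions on the fibres: $S_2$ for the generic fibre and $S_1$ for fibres over height-one primes. Freeness alone does not give this; compare $B\otimes C$ with $C$ not $S_2$.
\end{itemize}

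Your own Step~3 conclusion only uses reflexivity over $B$. Given freeness of $\OS$ and the codimension-one isomorphism, $\calO(S_f)\to\OS$ is an isomorphism directly. Ring-theoretic normality of $\OS$ then follows for free, since $S_f$ is an affine space. So the $R_1+S_2$ detour is unnecessary, and the freeness statement it presupposes is the step that still needs proof.
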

\begin{proof}
	Choose a splitting $ \VF/\VO \rightarrow \VF $. We regard sheaves in $ D(\VF/(\VO,\psi)) $ as sheaves in $ D(\VF/\VO) $. Then we have 
	\begin{equation*}
		\OS=\bigoplus_{W\in\Irr\Gl}\Hom(\dV,\dV\cG W)=\bigoplus_{W\in\Irr\Gl}i_0^!(\dV\cG W).
	\end{equation*}
	As direct sums of costalks, $ \OS $ is a free $ H^*_\GLn(\pt)\otimes H^*_\GLm(\pt) $-module. 
\end{proof}
We can use localization to calculate it by fixed points of torus actions. 

For the generic point in $ t\in\cartan_n\times\cartan_m $, the corresponding $ T $-action on $ \Hom(\bbC^n,\bbC^m) $ has fixed point $ \{0\} $. Let $ i_0\colon\{0\}\rightarrow V_F $ be the embedding. Hence we have
\begin{align*}
	&\OS\otimes_{\bbC[\cartan_n/\Sn\times\cartan_m/\Sm]}\bbC(\cartan_n/\Sn\times\cartan_m/\Sm)\\\simeq&\Hom(i_0^*\dV,i_0^*(\dV\cn\calO(\GLn)))\otimes_{\bbC[\cartan_n/\Sn\times\cartan_m/\Sm]}\bbC(\cartan_n/\Sn\times\cartan_m/\Sm)\\=&\Hom(\bbC_\pt,\bbC_\pt\otimes\sh{\calO(\GLn)})\otimes_{\bbC[\cartan_n/\Sn\times\cartan_m/\Sm]}\bbC(\cartan_n/\Sn\times\cartan_m/\Sm)\\=&\bbC(\cartan_n/\Sn\times\cartan_m/\Sm)\otimes\sh{\calO(\GLn)}. 
\end{align*}

For the point $ t\in\cartan_n\times\cartan_m $ lies in the hyperplane given by the resultant, the corresponding $ T $-action on $ \Hom(\bbC^n,\bbC^m) $ has fixed point $ \Hom(\bbC,\bbC) $, where $ \bbC\subset\bbC^n $ and $ \bbC\subset\bbC^m $ are the eigenspaces. Let $ i_2 $ be the embedding. 

To calculate $ i_2^*(\dV\cn\calO(\GLn)) $, one can first pull back through $ i_1\colon\Hom(\bbC^n,\bbC)\rightarrow V $. Then we have $ i_1^*(\dV\cn\calO(\GLn))=\delta_{(\bbC^n)^*}\!\cn\calO(\GLn)\simeq \delta_{(\bbC^n)^*}\!\conv\limits_{\GL_1}\!\gRes\calO(\GLn)=\oplus_{k\in\bbZ}\IC_k\otimes \calO(\GLn)_k^\shear $. Here the isomorphism is given in \cite{Ly11}. 

By calculation, we have $ \Hom(i_2^*\IC_0,i_2^*\IC_k)=\begin{cases}
	\bbC[-kn]&k\ge0\\\bbC[2k-kn]&k\le0
\end{cases}$, and hence
\begin{equation*}
	\OS_t=\Hom(i_2^*\dV,i_2^*(\dV\cn\calO(\GLn)))=\oplus_{k\in\bbZ}\calO(\GLn)_k^\shear=\sh{\calO(\GLn\times+/\Gm)}. 
\end{equation*}
\begin{cor}
	$ \OS $ is commutative. 
\end{cor}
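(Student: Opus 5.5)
The strategy is to deduce commutativity of $\OS$ from the two facts just proved: $\OS$ is a free module over $A=H^*_{\GLn}(\pt)\otimes H^*_{\GLm}(\pt)=\bbC[\cartan_n/\Sn\times\cartan_m/\Sm]$, and after localizing at the generic point of $\cartan_n/\Sn\times\cartan_m/\Sm$ the algebra becomes $\bbC(\cartan_n/\Sn\times\cartan_m/\Sm)\otimes\sh{\calO(\GLn)}$, which is commutative. Concretely, consider the commutator map
\begin{equation*}
	c\colon \OS\otimes_A\OS\rightarrow\OS,\qquad a\otimes b\mapsto ab-(-1)^{|a||b|}ba .
\end{equation*}
For $c$ to be $A$-bilinear, $A$ must act centrally on $\OS$. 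I would justify this by noting that $A=\Hom(\dG\boxtimes\dH,\dG\boxtimes\dH)$ acts on $\OS=\Hom(\dV,\dV\cG\calO(\Gl))$ through the Hecke action of the monoidal unit. Such an action commutes with Yoneda composition, and hence with the convolution product on $\OS$.

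Given centrality, $c$ vanishes after applying $-\otimes_A\bbC(\cartan_n/\Sn\times\cartan_m/\Sm)$, because the localization computed above is commutative. Since $\OS$ is free over $A$ (the preceding proposition), hence torsion-free, the map $\OS\rightarrow\OS\otimes_A\mathrm{Frac}(A)$ is injective. Every commutator $ab-(-1)^{|a||b|}ba\in\OS$ maps to zero in the localization, so it is zero, and $\OS$ is (graded) commutative.

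The step I expect to be the main obstacle is checking that the localization isomorphism is an isomorphism of algebras, not just of $A$-modules. It is obtained by restriction to the torus-fixed point $i_0$, so I would argue that $i_0^*$ is monoidal with respect to the Hecke action on the localized equivariant category. Under this restriction the product on $\Hom(\bbC_\pt,\bbC_\pt\otimes\sh{\calO(\GLn)})$ is the one induced by the multiplication of $\calO(\GLn)$, which is commutative. If this monoidality cannot be established cleanly, the fallback is the weaker statement that the localized algebra is a subalgebra of $\mathrm{Frac}(A)\otimes\End$ of a commuting family, which is enough to kill commutators.

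The resultant-hyperplane computation, giving $\sh{\calO(\GLn\times+/\Gm)}$, is not needed for commutativity. It becomes relevant only afterwards, in the codimension-one comparison with $\calO(S)$.
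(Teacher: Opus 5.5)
Your skeleton is the paper's. The corollary is read off from two facts: $\OS$ is free over $A=H^*_{\GLn}(\pt)\otimes H^*_{\GLm}(\pt)$, so $\OS\to\OS\otimes_A\mathrm{Frac}(A)$ is injective, and the generic computation gives $\OS\otimes_A\mathrm{Frac}(A)\simeq\mathrm{Frac}(A)\otimes\sh{\calO(\GLn)}$. You are also right that the resultant-hyperplane computation plays no role.

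The step ``$c$ vanishes after localization'' fails with the commutator you chose. The algebra $\mathrm{Frac}(A)\otimes\sh{\calO(\GLn)}$ is $\mathrm{Frac}(A)\otimes\calO(\GLn)$ with a grading, so it is commutative in the ordinary sense and a domain. It has odd-degree elements as soon as $m+n$ is even, because the $\std_n^*$-isotypic part $\Ext^\bullet(\dV,\dV\cn\std_n)\otimes\std_n^*$ sits in degrees $\equiv m-n+1 \pmod 2$. For such an element $a\neq 0$ you get $c(a\otimes a)=2a^2\neq 0$. This is not a technicality. For $n=1$ and $m$ odd, the generator $x\in\Ext^m(\IC_0,\IC_1)$ has $x^2=c_{1,1}x_2\neq 0$, so $\OS$ is not graded-commutative in the Koszul sense at all. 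What holds, and what the paper needs for $\OS\simeq\calO(\sh{S_f})$ (whose coordinates $v,v^*$ have degree $m-n+1$), is ordinary commutativity. Take $c(a\otimes b)=ab-ba$ and your argument goes through unchanged.

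Your reason for the centrality of $A$ is also too quick. The product on $\OS$ applies a Hecke functor $-\cG W$ to a morphism $f\colon\dV\to\dV\cG W'$. Under this operation, $A$ acting on $f$ through the equivariant structure of $\dV$ becomes $A$ acting on $\IC_W$ through the \emph{other} projection $\GO\backslash G_F/\GO\to\pt/\GO$. So commuting with Yoneda composition is not enough. You need the left and right actions of $H^*_G(\pt)$ on $\Ext^\bullet(\IC_W,\IC_W)$ to coincide.

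That is true here because $\Sat_G=D_{\GO}(\Gr_G)$ carries no loop rotation. For instance, both actions become multiplication by $\Sym(\gl[-2])^{\Gl}$ under the monoidal equivalence $\Sat_G\simeq\QCoh_\perf^\Gl(\gl^*[2])$ of \cite{BF08}. It fails once loop rotation is included, so it is a genuine input. The $H^*_{\GLm}(\pt)$ factor is harmless, since $\Sat_{\GLn}$ acts by $\HO$-equivariant functors. The multiplicativity of the fixed-point restriction, which you flag, is also assumed without comment in the paper.
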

In conclusion, the algebraic map $ \calO(S_f^\shear)\rightarrow \OS $ is an isomorphism over $ \Spec H^*_\GLn(\pt)\otimes H^*_\GLm(\pt) $ up to a codimension $ 2 $ subspace. By localization theorem, these two algebras are isomorphic. 

\end{document}